\def\DATE{\today}
\def\S{{\mathbb S}}
\def\Z{{\mathbb Z}}
\def\P{{\mathcal{P}}}
\def\Q{{\mathcal{Q}}}
\def\M{{\mathcal{M}}}
\theoremstyle{definition}
\theoremstyle{plain}
{
\newtheorem{lemma}{Lemma}
\newtheorem{proposition}{Proposition}
\newtheorem{theorem}{Theorem}
}
\theoremstyle{definition}
{
\newtheorem{example}{Example}
\newtheorem{corollary}{Corollary}
\newtheorem{remark}{Remark}
}
\newtheorem*{theorem*}{Theorem}
\title{Percolating sets and the operad of permutations}
\date{\today}
\author[D. Bashkirov]{Denis Bashkirov} %\email{bashkirov@math.cas.cz}
\address{Czech Academy of Sciences, Institute of Mathematics, {\v Z}itn{\'a} ,25,
         115 67 Prague 1, Czech Republic}
\begin{document}
\begin{abstract}
 We give an operadic interpretation of the known result of L.Shapiro and A.B.Stephens that characterizes percolating permutation matrices. A relation of ideals and suboperads of the non-symmetric operad of permutations to percolative properties of sets in the 2-neighbor percolation process on a square grid is discussed. On a related note, we discuss a certain presentation of the operad of permutations.
\end{abstract}

\maketitle

\section{Introduction}
Recall the setting of a 2-neighbor bootstrap percolation process on a rectangular $n$-by-$m$ lattice. It is a discrete dynamical system (a cellular automaton) defined by the following data:
\begin{enumerate}
\item 
an \emph{initial configuration}: a $n$-by-$m$ grid, where each cell is assigned one of the two possible states (a \emph{color}) - "red" or "blue".
\item 
an \textit{update rule}: a blue cell of the grid will change its color to red if at least two of its directly adjacent neighboring cells are red. Here, \emph{directly adjacent} means being aligned either horizontally or vertically. \item 
the \textit{dynamics}: the colors of the cells change iteratively starting with the initial configuration. 
At each iteration, all the blue cells that satisfy the condition of having at least two red neighbors turn red simultaneously. A red cell remains red for the rest of the iterative process. The process continues until no more blue cells can change their color. The stable state that the grid eventually reaches will be referred to as its \textit{final configuration}.
\end{enumerate}

\begin{figure}[H]
    \centering
    \begin{tabular}{ccc}
       \includegraphics[width=2.2cm]{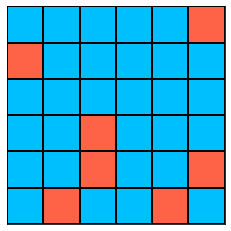}  &  
       \includegraphics[width=2.2cm]{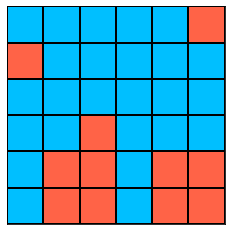} &
       \includegraphics[width=2.2cm]{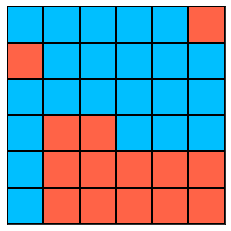}\\
    \end{tabular}
    \caption{Three consecutive steps of the percolation process}    
\end{figure}
In the context of the Ising model, this discrete dynamical system arises as the zero-temperature regime of the isotropic ferromagnetic Glauber dynamics on a $n$-by-$m$ rectangular lattice with the free boundary conditions in presence of a (strong enough) external transverse magnetic field. The colors correspond to the spin states and red is the one that is aligned with the external field. A grid, in the sense of our terminology, is the Poincar{\'e}-dual depiction of a rectangular lattice graph, where vertices are replaced by 2-cells and edges are flipped to the transverse direction. We assume that grids come equipped with the standard matrix-like coordinate system in terms of rows and columns enumerated from top to bottom and from left to right respectively.

An initial configuration is said to be \emph{percolating} if the final configuration of the grid consists of red cells only, thus yielding the lowest energy state of the Ising Hamiltonian. A \emph{percolating set} $\sigma$ is the set of all red cell positions $(i,j)$ in a percolating configuration. A set of positions $\tau$ is said to be \emph{non-percolating} if the initial configuration has red cells positioned at all $(i,j)\in \tau$, but the final configuration contains at least one blue cell. We will routinely identify a grid configuration, percolating or not, with the corresponding set of the red cell positions. 
For a given initial configuration $\sigma$, the corresponding final configuration will be denoted by $\overline{\sigma}$. We say that a set $\sigma$ \textit{spans} a region $R$ in a grid if $\overline{\sigma}\cap R=R$.

Percolating sets are partially ordered by inclusion. A percolating set $\sigma$ is said to be \textit{minimal} if $\sigma\setminus\{x\}$ is a non-percolating set for any $x\in\sigma$. One can show that any percolating set $\sigma$ in a square $n$-by-$n$ grid is of size at least $n$. The lower bound is sharp as attested, for instance, by a diagonal configuration. 
The following are the known values for the number of all minimal percolating sets of size $n\geq 1$ in a $n$-by-$n$ grid: 
\[
1, 2, 14, 130, 1615, 23140, 383820, 7006916, 140537609, 3035127766.
\]
This is sequence A145901\cite{oeisA145901}.
A result of R.Morris\cite{Morris} provides an upper bound for the size of a minimal percolating set $\sigma$ on a square $n$-by-$n$ grid as $|\sigma|\leq \frac{(n+2)^2}{6}$.

To better understand the dynamics of the model, it is of interest to classify percolating and non-percolating sets, the minimal and the maximal ones respectively, in particular.
This includes identifying non-trivial infinite families of both percolating and non-percolating sets. The reader will find an example of such a family of minimal percolating sets in \cite[Lemma 3]{Morris}. A simpler example from the same work is the family of minimal percolating sets $A(m)$ of size $|A(m)|=4m$ on a $3m$-by-$3m$ grid, defined by the red cell placements at
$(1,3k-1)$, $(1,3k)$, $(3k-1,1)$, $(3k,1)$ for all $1\leq k\leq m$.
\begin{figure}[H]
\centering
\includegraphics[width=2.5cm]{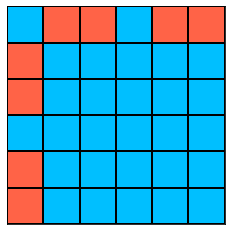}
\caption{$A(2)$}
\end{figure}

One way to generate families of percolating and non-percolating sets is to invoke the block renormalization procedure. Namely, given a configuration $\sigma$ in a $n$-by-$n$ grid and $k>1$, a new configuration is created by fine-graining the original grid by a factor of $k$, that is, by dissecting each row and each column of the grid into $k$ rows and $k$ columns respectively. Under such a procedure, a cell gets replaced by a $k$-by-$k$ configuration with all cells of the same respective color. Furthermore, we may reduce a fine-grained configuration by replacing any all-red $k$-by-$k$ block by an arbitrary percolating $k$-by-$k$ configuration instead. Jointly, all this can be regarded as a compound operation on $\sigma$ that amounts to replacing each red cell of $\sigma$ by a percolating $k$-by-$k$ configuration and replacing each blue cell by an all-blue $k$-by-$k$ block.
\begin{table}[H]
\centering
\begin{tabular}{ccc}
\resizebox{2.3cm}{!}{\includegraphics{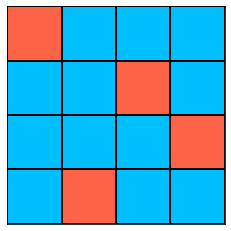}}     & 
\resizebox{2.3cm}{!}{\includegraphics{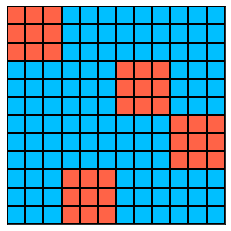}}
   & 
\resizebox{2.3cm}{!}{\includegraphics{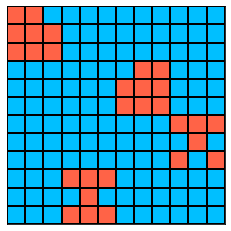}}
\end{tabular}
\captionof{figure}{Fine-graining and a reduction of a configuration.}
\end{table}
A natural question to ask is whether such an operation applied to a configuration $\sigma$ preserves its property of being (non-)percolating. In general, the answer is negative as may be observed, for instance, in the following example, where block renormalization by a factor of $k=2$ transforms a percolating configuration into a non-percolating one.
\begin{table}[H]
\centering
\begin{tabular}{cc}
\resizebox{2.3cm}{!} {\includegraphics{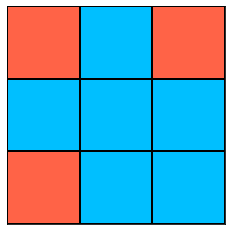}} & 
\resizebox{2.3cm}{!}{\includegraphics{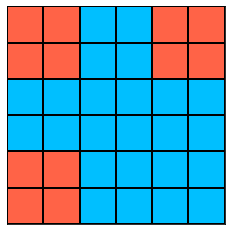}}
\end{tabular}
\captionof{figure}{A percolating configuration and its non-percolating fine-grained image.}
\end{table}
Nevertheless, as we discuss in the present work, there are infinite families of percolating and non-percolating sets that do retain their respective percolative qualities under block renormalization and reduction. Such families
can be generated by means of a certain operation that is associative, or generalized associative, in an appropriate sense.
A complementary point of view is that such families tend to be \emph{operadic}, meaning they are closed under a certain generalized-associative operation.
In this regard, the infinite family of minimal percolating sets examined by L. Shapiro and A. B. Stephens in \cite{ShapiroStephens} is of particular interest for us.
The family is defined in the following way. Let $\sigma$ be a $n$-by-$n$ permutation matrix regarded as a grid configuration upon identifying $1$'s with red cells, and $0$'s with blue ones. As shown in the work cited, such a configuration is percolating if and only if the permutation $\sigma$ is separable. We recall that a permutation is said to be \emph{separable}, if it can be built recursively, starting with the trivial permutation of length $1$, using two operations: the direct sum $\sigma\oplus \tau$ and the skew sum $\sigma\ominus\tau$ of two permutations. In terms of the corresponding permutation matrices these operations can be defined by forming a block matrix out of the two given permutation matrices in two different ways:

\begin{align*}
\sigma \oplus \tau=
    \left[
    \begin{array}{c;{2pt/2pt}c}
        \sigma & \Huge 0 \\ \hdashline[2pt/2pt]
        \Huge 0 & \tau 
    \end{array}
\right],\quad 
\sigma \ominus \tau=
    \left[
    \begin{array}{c;{2pt/2pt}c}
        \Huge 0 & \sigma  \\ \hdashline[2pt/2pt]
        \tau & \Huge 0
    \end{array}
\right].
\end{align*}
% \begin{table}
%     % \centering
%     % \begin{tabular}{cc}
%     %     \resizebox{3cm}{!}{\input{direct}} & \\
%     % \end{tabular}
%     % \caption{Caption}
%     % \label{tab:my_label}
%     \left[
%     \begin{array}{c;{2pt/2pt}c}
%         N & B_i \\ \hdashline[2pt/2pt]
%         B_i^T & 0 
%     \end{array}
% \right]
% \end{table}
Separable permutations are counted by the big Schr\"oder numbers (A006318 \cite{oeisA006318}):
\[
1, 2, 6, 22, 90, 394, 1806, 8558, 41586, 206098\,\dots
\]
Other families of objects counted by this sequence include the combinatorial types of paper guillotine cuts, plane two-terminal (or bipolar) series-parallel networks and bicolored trees.
In operadic terms, the result of Shapiro and Stephens can be restated as follows.
\begin{theorem*}[\cite{ShapiroStephens}]
The percolating $n$-by-$n$ permutation matrices for $n\geq 2$ form a suboperad $Perm_2$ of the non-$\Sigma$ operad of permutations $Perm$. The suboperad is generated by permutations $12$ and $21$ in arity $2$ and consists of all separable permutations. 
\end{theorem*}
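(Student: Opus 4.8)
The plan is to deduce the operadic statement from the (already cited) Shapiro--Stephens equivalence ``$\sigma$ percolates $\iff$ $\sigma$ is separable'' together with routine operadic bookkeeping; no further percolation-theoretic input is needed. First I will recall the structure of $Perm$: one has $Perm(n)=S_n$, the operadic unit is the trivial permutation $1\in S_1=Perm(1)$, and the composition $\gamma(\sigma;\tau_1,\dots,\tau_n)$ is \emph{inflation} --- the permutation matrix of $\sigma$ with, for each $i$, the $1$ in its $i$-th column replaced by a block carrying the permutation matrix of $\tau_i$ (the ambient grid being dilated accordingly). Two elementary observations drive everything: (i) $\gamma(12;\alpha,\beta)=\alpha\oplus\beta$, and $\gamma(21;\alpha,\beta)$ equals $\alpha\ominus\beta$ or $\beta\ominus\alpha$ according to one's matrix/inflation conventions --- the ambiguity being immaterial below; and (ii) inflation is compatible with block decompositions, i.e.\ if $\sigma=\sigma'\oplus\sigma''$ with $\sigma'$ of arity $p$ then $\gamma(\sigma;\tau_1,\dots,\tau_n)=\gamma(\sigma';\tau_1,\dots,\tau_p)\oplus\gamma(\sigma'';\tau_{p+1},\dots,\tau_n)$, and symmetrically for $\ominus$.

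Next I will verify that the collection $S$ of separable permutations (with $S(1)=\{1\}$) is a suboperad of $Perm$. It contains the unit, so it remains to check closure under $\gamma$: given separable $\sigma,\tau_1,\dots,\tau_n$, induct on a fixed $\oplus/\ominus$-construction of $\sigma$. If $\sigma=1$ then $\gamma(1;\tau_1)=\tau_1\in S$; if $\sigma=\sigma'\oplus\sigma''$ or $\sigma=\sigma'\ominus\sigma''$, apply (ii) and the inductive hypothesis, using that $S$ is closed under $\oplus$ and $\ominus$ by the very definition of separability. Hence $S$ is a suboperad. By the Shapiro--Stephens equivalence, $S(n)$ is precisely the set of percolating $n$-by-$n$ permutation matrices for each $n\geq 2$ (and $S(1)=\{1\}$ trivially, a single red cell being its own final configuration), so the percolating permutation matrices do assemble into a suboperad, which we denote $Perm_2$.

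It remains to show $Perm_2=S$ is generated by $12$ and $21$. The inclusion $\langle 12,21\rangle\subseteq S$ is immediate: $S$ is a suboperad containing $12=1\oplus 1$ and $21=1\ominus 1$. Conversely, let $\sigma\in S$; inducting on its $\oplus/\ominus$-construction, $1$ lies in $\langle 12,21\rangle$ as the operadic unit, while if $\sigma=\sigma'\oplus\sigma''$ then $\sigma=\gamma(12;\sigma',\sigma'')\in\langle 12,21\rangle$ by the inductive hypothesis, and if $\sigma=\sigma'\ominus\sigma''$ then $\sigma=\gamma(21;\sigma',\sigma'')$ (or $\gamma(21;\sigma'',\sigma')$) lies there as well. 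Thus $S=\langle 12,21\rangle$, and the theorem follows.

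As for obstacles: the one substantive ingredient, the percolation characterization, is imported wholesale, so what remains is essentially a dictionary. The step needing the most care is observation (ii) --- the compatibility of inflation with the $\oplus$ and $\ominus$ block structures --- together with pinning down the matrix and inflation conventions so that (i) is literally correct; once these are fixed, the suboperad and generation claims are short inductions on the recursive definition of a separable permutation. It is worth stressing that nothing deeper about separable permutations (such as the $\{2413,3142\}$ pattern-avoidance description) is invoked: the recursive $\oplus/\ominus$ definition is already an operadic presentation in terms of $12$ and $21$, which is exactly why the operadic restatement amounts to a translation.
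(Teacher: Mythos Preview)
Your proposal is correct but takes a different route from the paper. You import the Shapiro--Stephens equivalence ``percolating $\iff$ separable'' as a black box and then verify the purely operadic claims (separable permutations are closed under inflation and are generated by $12$, $21$), which is indeed a short dictionary exercise. The paper, by contrast, \emph{re-proves} the Shapiro--Stephens equivalence from scratch: for ``percolating $\Rightarrow$ separable'' it argues that a percolating permutation matrix must contain a $2$-by-$2$ diagonal block $\tau$ (coming from a cell that flips at the first percolation step), contracts it to write $\sigma=\sigma'\circ_j\tau$, and invokes Lemma~\ref{insert_comp_lemma} to conclude $\sigma'$ is percolating, hence separable by induction; for ``separable $\Rightarrow$ percolating'' it uses Lemma~\ref{union_lemma} on corner-sharing rectangles. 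Your approach is cleaner if one is content to cite the equivalence, and it makes explicit that the operadic restatement is a translation. The paper's approach is self-contained and, more importantly, the percolation-theoretic lemmas it develops (especially Lemma~\ref{insert_comp_lemma}) are reused later to show that non-percolating permutation matrices form an ideal and to analyze the filtration $P_n$.
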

A reader familiar with the notion of a \emph{non-$\Sigma$} (also called \emph{nonsymmetric}) operad of permutations may proceed directly to section \ref{percsec} for a proof. Otherwise, a brief introduction to the subject is provided in section~ \ref{backopsec} below. In the remainder of the paper we discuss a presentation of the non-$\Sigma$ operad of $Perm$. Note that while $Perm$ can be regarded as the desymmetrization of the symmetric associative operad $\mathcal{A}s$ in sets, it is, unlike $\mathcal{A}s$, not binary-generated, it requires an infinite countable set of generators, and is a fairly transcendental object, since the generating function for the cardinalities of the canonical generators, as we discuss in section \ref{permpres}, is known not be non-algebraic \cite{AlbertAtkinsonKlazar}.
 \begin{theorem*}
$Perm$ admits a presentation with generators $G_n$ indexed by oriented Hamiltonian cycles on the complement of the cyclic graph $C_{n+1}$ for $n\geq 4$, and two exceptional generators $G_2=\{12,21\}$ in arity $n=2$.
\end{theorem*}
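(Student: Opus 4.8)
The plan is to combine the Albert--Atkinson substitution decomposition theorem for permutations with an explicit identification of the generators, and then to pin down the relations by a normal-form count. Recall that $Perm(n)=S_n$ and that operadic composition is inflation: $\gamma(\sigma;\beta_1,\dots,\beta_k)=\sigma[\beta_1,\dots,\beta_k]$ is the permutation obtained from the plot of $\sigma\in S_k$ by blowing up its $i$-th point into a rescaled copy of the plot of $\beta_i$. To a permutation $\pi\in S_n$ I attach the cyclic word $\Psi(\pi)=(0,\pi(1),\pi(2),\dots,\pi(n))$ on the vertex set $\{0,1,\dots,n\}$ of $C_{n+1}$, where $C_{n+1}$ has the edges $\{j,\,j+1\bmod(n+1)\}$. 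Then $\Psi(\pi)$ is a Hamiltonian cycle of the complement $\overline{C_{n+1}}$ exactly when no two consecutive entries of $\Psi(\pi)$ span an edge of $C_{n+1}$, that is, iff $|\pi(i)-\pi(i+1)|\ge 2$ for $1\le i<n$ and $\pi(1),\pi(n)\notin\{1,n\}$; call such permutations \emph{admissible}. Since $\pi$ is recovered from $\Psi(\pi)$ by reading it off starting at $0$ in the chosen orientation, $\Psi$ is a bijection between admissible permutations of length $n$ and oriented Hamiltonian cycles of $\overline{C_{n+1}}$. There are no admissible permutations of length $3$, matching the fact that $\overline{C_4}$ has no Hamiltonian cycle; accordingly set $G_3=\varnothing$, $G_2=\{12,21\}$ (the two length-$2$ permutations, which are not admissible but are nonetheless needed and will carry the associativity relations), and $G_n=\{\text{admissible permutations of length }n\}$ for $n\ge 4$, the latter identified via $\Psi$ with the oriented Hamiltonian cycles of $\overline{C_{n+1}}$.

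Next I would check that $\bigsqcup_n G_n$ generates $Perm$. If positions $j,j+1$ form a non-trivial interval of a permutation then the two values there differ by $1$, and if $\pi(1)\in\{1,n\}$ or $\pi(n)\in\{1,n\}$ then $\pi$ is $\oplus$- or $\ominus$-decomposable; hence every simple permutation of length $\ge 4$ is admissible and so belongs to $\bigcup_{m\ge 4}G_m$. Together with $G_2=\{12,21\}$ this accounts for all simple permutations (there being none of length $3$), so by the substitution decomposition theorem --- every permutation of length $\ge 2$ is an iterated inflation of simple permutations --- the collection $\bigsqcup_n G_n$ generates $Perm$. (For $n\le 6$ one verifies conversely that admissibility forces simplicity, so there $G_n$ is the minimal generating set; for $n\ge 7$ it is strictly larger --- e.g.\ $3157462$ is admissible but not simple, as positions $\{3,4,5,6\}$ form an interval.)

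It remains to produce the relations. Let $\mathcal F$ be the free non-$\Sigma$ operad on $\bigsqcup_n G_n$, let $\phi\colon\mathcal F\to Perm$ be the induced surjection, and let $R$ consist of: (i) the associativity relations $12\circ_1 12=12\circ_2 12$ and $21\circ_1 21=21\circ_2 21$; and (ii) for every decomposable $g\in G_n$ with $n\ge 4$, the relation $g=\gamma(\sigma;\beta_1,\dots,\beta_k)$ recording the top level of the substitution decomposition of $g$ (here $\sigma\in\{12,21\}\cup\bigcup_{m\ge 4}G_m$ is simple and each $\beta_i$ has smaller arity, hence inductively is a composite of generators). Every relation in $R$ holds in $Perm$, so $\phi$ descends to a map $\mathcal F/(R)\to Perm$; to see it is an isomorphism it suffices to prove $|(\mathcal F/(R))(n)|\le n!$. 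Iterating (ii) rewrites any element of $\mathcal F/(R)$ as a \emph{reduced} decomposition tree, whose internal nodes carry simple permutations of length $\ge 4$, or $12$, or $21$, with no $12$ immediately above $12$ and no $21$ immediately above $21$; by the uniqueness clause of the substitution decomposition theorem such reduced trees are in bijection with $S_n$. As $\phi$ is already onto, $|(\mathcal F/(R))(n)|=n!$, $\phi$ is an isomorphism, and transporting $\bigsqcup_n G_n$ along $\Psi$ yields the claimed presentation.

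The bijection $\Psi$ and the generation statement are routine; the crux is the completeness of $R$. The delicate point is that the rewriting by (ii) terminates at reduced trees and that the only surviving ambiguity among reduced trees with a common image is the re-bracketing of maximal $\oplus$- and $\ominus$-chains, which is exactly what (i) absorbs --- this is where the uniqueness half of the substitution decomposition theorem is used in full, and I expect it to be the part of the argument requiring the most work.
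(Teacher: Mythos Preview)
Your proof is correct, and in some respects more complete than the paper's, but it follows a different route.

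Both you and the paper use the same bijection $\Psi(\pi)=(0,\pi(1),\dots,\pi(n))$ between permutations and oriented Hamiltonian cycles on $\{0,1,\dots,n\}$. The paper's argument, however, is operadic: it shows that under $\Psi$ the partial composition $\sigma\circ_i\tau$ becomes the geometric operation of gluing the $\tau$-circle by its $0$-point onto the $\sigma_i$-point of the $\sigma$-circle and resolving the node. This makes $\Psi$ an isomorphism of non-$\Sigma$ operads, and the identification of $G_n$ with cycles on $\overline{C_{n+1}}$ is then read off from the model (a cycle uses a $C_{n+1}$-edge precisely when it factors through a gluing with a $2$- or $3$-point circle, i.e.\ through a $Perm_2$-action). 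Your argument bypasses this: you use $\Psi$ only as a set bijection, characterise $G_n$ directly as the ``admissible'' permutations, and then import the Albert--Atkinson substitution decomposition theorem to obtain generation (simple $\Rightarrow$ admissible, and simples generate). You then go further than the paper by writing down an explicit set of relations and verifying completeness by a normal-form count.

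Two remarks. First, in the paper $G_n$ is defined \emph{before} the theorem as $S_n\setminus P_{n-1}(n)$ for the $Perm_2$-bimodule filtration, not as the admissible permutations; you silently redefine it. The two definitions do coincide---$\sigma\in P_{n-1}(n)$ iff $\sigma=a\circ_i m$ or $m\circ_j b$ with $a,b\in Perm_2(\ge 2)$, and one checks (using that every separable permutation of length $\ge 2$ has two adjacent positions with consecutive values) that this is exactly non-admissibility---but strictly speaking this equivalence is part of what has to be proved. Second, your relation set is minimal and your completeness argument via reduced trees is sound; the paper does not attempt this and treats the word ``presentation'' as meaning a generating set together with the indexing bijection. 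What you gain is an explicit presentation with relations; what the paper's approach gains is a self-contained operadic picture in which the composition itself is visible on the Hamiltonian-cycle side, without appeal to the external substitution decomposition theorem.
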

The generators of this presentation can be related to non-percolating permutation matrices.

\section{A background on operads}
\label{backopsec}
For our purposes, we need only a bare minimum of operadic algebra and only three particular examples of \emph{non-$\Sigma$ operads}: the operad of permutations $Perm$, the associative operad $As$, and the \emph{free product} $As*As$ of $As$ with itself.
All operads that we consider are defined in the usual symmetric monoidal category of sets. A reader unfamiliar with the general definition of an operad might regard constructions presented below as ad hoc examples of a monoid-like algebraic structure defined on the disjoint union 
${\P=\bigsqcup\limits_{n\geq 1}\P(n)}$ of some sets $\P(n)$ by means of a countable family of binary operations, called \emph{partial compositions} that are of the form $(-\circ_i-):\P(m)\times \P(n)\to \P(m+n-1)$ for all $m,n\geq 1$ and $1\leq i\leq m$. The non-triviality and utility of the notion is due to the fact that partial compositions collectively satisfy a certain property, a form of generalized associativity, which is a defining feature of operads. Namely, 
for any~$k,l,m\geq 1$, $a\in \P(k)$, $b\in \P(l)$, $c\in\P(m)$, we have
\begin{align}
    \label{assoc1}
    (a\circ_i b)\circ_j c=a\circ_i(b\circ_{j-i+1}c)
\end{align}
for all $1\leq i\leq k$, $i\leq j\leq i+l-1$, and
\begin{align}
    \label{assoc2}
    (a\circ_i b)\circ_{j + l - 1} c= (a\circ_j c)\circ_{i} b
\end{align}
for all $1\leq i<j\leq k$. Note that in the special case of $k=l=m=1$, condition \eqref{assoc1} translates to ordinary associativity of a binary operation $(-\circ_1-)$, making $\P(1)$ a semigroup, whilst condition \eqref{assoc2}
does not apply.

These conditions are modeled upon the respective properties of the operation of \textit{planar rooted trees grafting} $(-\circ_i-)$. The operation amounts to attaching a planar rooted tree $b$ onto the $i$-th leaf of a tree $a$ by its root and then  enumerating all the leaves of the resulting tree consecutively by going over them from left to right. A canonical leaf enumeration and such a traversal are well-defined due the planarity assumption.
    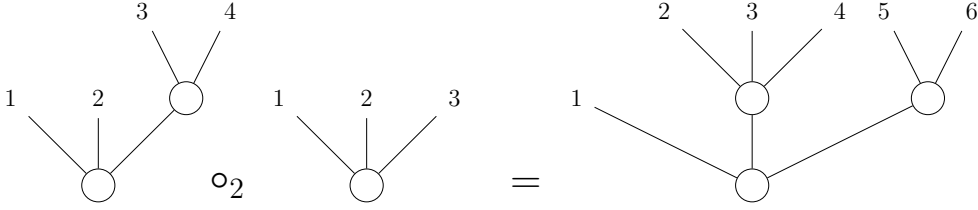
\begin{figure}[H]
        \centering
        \resizebox{0.8\textwidth}{!}{\begin{tikzpicture}[scale=1, grow=up, level distance=1.5cm, sibling distance=1.5cm, inner sep=2mm]

    % Tree t1 (left)
    \node[circle, draw] (A1) {}
        child { node[circle, draw] (A2) {}
            child { node (A3) {\Large 4} }
            child { node (A4) {\Large 3} }
        }
        child { node(A5) {\Large 2} }
        child { node (A6) {\Large 1} };

    % Label for t1
%    \node at (-2, -1.5) {\(t_1\)};
    \node at (2.2, 0) {\Huge $\circ_2$};
    % Tree t2 (center)
    \node[circle, draw, right=4cm of A1] (B1) {}
        child { node (B2) {\Large 3} }
        child { node (B3) {\Large 2} }
        child { node (B4) {\Large 1} };

    % Label for t2
 %   \node at (2, -1.5) {\(t_2\)};
    
    % Equals sign
    \node at (7.3, 0) {\Huge =};
    
    % Grafted tree t3 (right)
    \node[circle, draw, right=6cm of B1] (C1) {}
        [sibling distance=3cm]
        child { node[draw, circle] (C6) {}
            [sibling distance=1.5cm]
            child { node (C7) {\Large 6} }
            child { node (C8) {\Large 5} }
        }
        child { node[circle, draw] (C2) {}
            [sibling distance=1.5cm]
            child { node (C3) {\Large 4} }
            child { node (C4) {\Large 3} }
            child { node (C5) {\Large 2} }
        }
        child { node(C9) {\Large 1} };

    % Label for t3
  %  \node at (9, -1.5) {\(t_3\)};
    
   \end{tikzpicture}}
        \caption{Planar rooted tree grafting}
    \end{figure}
To better understand the origin of the identities \eqref{assoc1} and \eqref{assoc2}, the reader is encouraged to draw and compare the grafting diagrams that would appear on each side of \eqref{assoc1} and \eqref{assoc2}, upon picking some planar rooted trees with $k$, $l$ and $m$ leaves for $a$, $b$ and $c$ respectively. Planar rooted trees with the grafting operations $(-\circ_i-)$ are to be regarded as \emph{the} prototypical non-$\Sigma$ operad. Any other non-$\Sigma$ operad is modeled upon it in a certain very precise sense \cite[Section 5.9.5]{LodayVallette}.
The term \emph{non-$\Sigma$} is meant to indicate the absence of any particular choice of a group action on the components $\P(n)$ of an operad $\P$, and henceforth, the absence of any equivariance conditions imposed on the partial compositions $(-\circ_i-)$. 

Some basic concepts can be readily extrapolated from semigroups and monoids to the realm of operads. Namely, given a non-$\Sigma$ operad $\P$, a subset ${\Q=\bigsqcup\limits_{n\geq 1}\Q(n)}\subseteq \P$ is said to be
\begin{itemize}
    \item a \emph{suboperad} of $\P$, if $a\circ_i b\in \Q$ for any $a\in \Q(m)$, $b\in \Q(n)$, $1\leq i\leq m$;
    \item a \emph{left ideal} of $\P$, if $a\circ_i b\in \Q$ for any $a\in \P(m)$, $b\in \Q(n)$, $1\leq i\leq m$;
    \item a \emph{right ideal} of $\P$, if $a\circ_i b\in \Q$ for any $a\in \Q(m)$, $b\in \P(n)$, $1\leq i\leq m$.
\end{itemize}
The term \emph{ideal} will refer to a subset of $\P$ that is both a left and a right ideal of $\P$. 
A set $\M=\bigsqcup\limits_{n\geq 1}\M(n)$ is said to be a \emph{left $\P$-module} 
if it is endowed with a family of maps ${(-\circ_i-):\P(m)\times \M(n) \to \M(m+n-1)}$
defined for all $m,n\geq 1$, $1\leq i\leq m$ that satisfy
\begin{align*}
    (a\circ_i b)\circ_j t=a\circ_i(b\circ_{j-i+1}t)
\end{align*}
for all $a\in \P(k)$, $b\in \P(l)$, $t\in \M(m)$ and $1\leq i\leq k$, $i\leq j\leq i+l-1$.

Similarly, a set $\M=\bigsqcup\limits_{n\geq 1}\M(n)$ is said to be a \emph{right $\P$-module} 
if it is endowed with a family of maps ${(-\circ_i-):\M(m)\times \P(n) \to \M(m+n-1)}$
defined for all $m,n\geq 1$, $1\leq i\leq m$ such that
\begin{align*}
    (t\circ_i a)\circ_j b=t\circ_i(a\circ_{j-i+1}b)
\end{align*}
for all $a\in \P(l)$, $b\in \P(m)$, $t\in \M(k)$, $1\leq i\leq k$, $i\leq j\leq i+l-1$,
and
\begin{align*}
    (t\circ_i a)\circ_{j + l - 1} b= (t\circ_j b)\circ_{i} a
\end{align*}
for all $1\leq i<j\leq k$.
Note an asymmetry in the definitions of a left and a right $\P$-module caused by an additional condition present in the latter case. This asymmetry does not emerge at the level of semigroups and monoids.

A set $\M=\bigsqcup\limits_{n\geq 1}\M(n)$ is said to be a \emph{$\P$-bimodule} if it is both a left and a right $\P$-module and, in addition, satisfies the compatibility conditions
\begin{align*}
(a\circ_i t)\circ_j b=a\circ_i(t\circ_{j-i+1}b)\\
(a\circ_i t)\circ_{j + l - 1} b= (a\circ_j b)\circ_{i} t
\end{align*}
that are to hold for any $a\in \P(k)$, $t\in \M(l)$, $b\in \M(m)$ and all appropriate values of $i$ and $j$ as in \eqref{assoc1}, \eqref{assoc2}.
\subsection{The operad of permutations.}
\label{premopsec}
For $n\geq 1$, let $Perm(n):=\S_n$ as a set. For all $m,n\geq 1$ and $1\leq i\leq m$, the \textit{partial composition}
$
{(-\circ_i-): Perm(m)\times Perm(n)\to Perm(m+n-1)}$ is defined 
as follows. In terms of the one-line notation for permutations, we set
\begin{align}
\label{permop}
a_1a_2\dots a_m \circ_i b_1 b_2\dots b_n:=
a_1'a_2'\dots {a_{i-1}'}
%\underbracket[0.1ex]
{(b_1+a_i-1)\,(b_2+a_i-1)\,\dots (b_n+a_i-1)} a_{i+1}'\dots a_m',
\end{align}
where $a_k'=a_k+n-1$ if $a_k>a_i$ and $a_k'=a_k$ otherwise.
For example, $31425\circ_3 231=31\underbracket[1pt]{564}27$.
A direct calculation shows that the associativity conditions \eqref{assoc1} and \eqref{assoc2} do indeed hold in $Perm$.
The disjoint union $Perm=\bigsqcup\limits_{n\geq 1}Perm(n)$ taken together with all the partial compositions $(-\circ_i-)$ is called the \emph{non-$\Sigma$ operad of permutations}. 
\begin{remark}
For any two permutations $\sigma,\tau$, the partial composition $\sigma\circ_i\tau$ is a particular case of the \textit{permutation inflation} operation \cite{AlbertAtkinsonKlazar}.
\end{remark}

In terms of bipartite matching diagrams of permutations,
a partial composition $a \circ_i b$ amounts to substituting the entire diagram $b$ for the $i$-th string of $a$. 
\begin{figure}[H]
\resizebox{\textwidth}{!}{\begin{tikzpicture}

% First permutation diagram: (3,1,4,2,5)
\begin{scope}
    % Coordinates for the bottom row (domain of the permutation)
    \foreach \i in {1,...,5} {
        \node[below] at (\i, -1) {\i};   % Labels 1 to 5
        \fill (\i,-1) circle (2pt);      % Nodes in the bottom row
    }

    % Coordinates for the top row (image of the permutation)
    \foreach \i/\j in {1/3, 2/1, 3/4, 4/2, 5/5} {
        \node[above] at (\i, 1) {};  % Labels of the permutation (3,1,4,2,5)
        \fill (\i,1) circle (2pt);     % Nodes in the top row
    }

    % Matching lines between nodes
    \draw (1,-1) -- (3,1);  % 1 maps to 3
    \draw (2,-1) -- (1,1);  % 2 maps to 1
    \draw[red] (3,-1) -- (4,1);  % 3 maps to 4
    \draw (4,-1) -- (2,1);  % 4 maps to 2
    \draw (5,-1) -- (5,1);  % 5 maps to 5
\end{scope}

% Symbol \circ_3 in between
\node at (6, 0) {\Large $\circ_3$};

% Second permutation diagram: (2,3,1)
\begin{scope}[xshift=6cm]
    % Coordinates for the bottom row (domain of the permutation)
    \foreach \i in {1,...,3} {
        \node[below] at (\i, -1) {\i};   % Labels 1 to 3
        \fill (\i,-1) circle (2pt);      % Nodes in the bottom row
    }

    % Coordinates for the top row (image of the permutation)
    \foreach \i/\j in {1/2, 2/3, 3/1} {
        \node[above] at (\i, 1) {};  % Labels of the permutation (2,3,1)
        \fill (\i,1) circle (2pt);     % Nodes in the top row
    }

    % Matching lines between nodes
    \draw (1,-1) -- (2,1);  % 1 maps to 2
    \draw (2,-1) -- (3,1);  % 2 maps to 3
    \draw (3,-1) -- (1,1);  % 3 maps to 1
\end{scope}

\node at (10, 0) {\Large $=$};

\begin{scope}[xshift=10cm]
    % Coordinates for the bottom row (domain of the permutation)
    \foreach \i in {1,...,7} {
        \node[below] at (\i, -1) {\i};   
        \fill (\i,-1) circle (2pt);      % Nodes in the bottom row
    }

    % Coordinates for the top row (image of the permutation)
    \foreach \i/\j in {1/3, 2/1, 6/2, 7/7} {
        \node[above] at (\i, 1) {};  
        \fill (\i,1) circle (2pt);     % Nodes in the top row
        \draw (\i, -1) -- (\j,1);
    }

     % Coordinates for the top row (image of the permutation)
    \foreach \i/\j in {3/5, 4/6, 5/4} {
        \node[above] at (\i, 1) {};  
        \fill (\i,1) circle (2pt);     % Nodes in the top row
        \draw[red] (\i, -1) -- (\j,1);
    }

\end{scope}

\end{tikzpicture}}
\caption{Evaluating $31425\circ_3 231 = 3156427$ in $Perm$.}
\end{figure}
The group structure of the $\S_n$'s will not play any significant role for us. Instead, we would like to think of elements of $Perm(n)$ combinatorially, as of $n$-by-$n$ grid configurations subject to a certain constraint: there must be exactly one red cell in each row and each column of the grid. In terms of grid configurations, a partial composition $a\circ_j b$ amounts to substituting configuration $b$ for the unique red cell in the $j$-th column of $a$. Unless $b$ is the trivial $1$-by-$1$ permutation matrix, all the blue cells in the $j$-th column and the $a_j$-th row of $a$ would have to be additionally fine-grained (renormalized) by a factor of $|b|$ in order to maintain the square grid structure.
\begin{figure}[H]
\resizebox{0.6\textwidth}{!}{\input{permcompmat}}
\caption{Evaluating $31425\circ_3 231 = 3156427$ in $Perm$.}
\end{figure}
\subsection{Pattern containment and avoidance in $Perm$.}
Recall that a permutation $\sigma=\sigma_1\dots\sigma_n$ is said to \emph{contain a permutation} ${\tau=\tau_1\dots\tau_m}$ as a \emph{pattern}, denoted $\tau \leq \sigma$, if there exist $j_1<\dots<j_m$ such that the entries $\sigma_{j_1}\dots\sigma_{j_m}$ of $\sigma$ are in the same relative order as $\tau_1\dots\tau_m$.
We say that a permutation $\sigma$ \textit{avoids} $\tau$ if $\tau$ is not contained in $\sigma$ as a pattern. 
For example, $231\leq \underline{3}1\underline{42}$, but $3142$ avoids $321$.
Pattern containment is a partial order on $Perm$.
% Note that by the very definition \eqref{permop} of the partial compositions in $Perm$,
% we have
% $a\leq a\circ_i b$ and $b\leq a\circ_i b$ for any permutations $a$, $b$.
\begin{lemma}
For any $a\in Perm(m)$, $b\in Perm(n)$, $1\leq i\leq m$, we have $a\leq a\circ_i b$ and $b\leq a\circ_i b$.
\end{lemma}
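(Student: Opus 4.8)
The plan is to prove both containments by exhibiting an explicit subsequence of $a\circ_i b$ that is order-isomorphic to the required pattern, reading the entries straight off formula \eqref{permop}. The containment $b\le a\circ_i b$ is immediate: the entries of $a\circ_i b$ occupying the consecutive positions $i,i+1,\dots,i+n-1$ are precisely $b_1+a_i-1,\;b_2+a_i-1,\;\dots,\;b_n+a_i-1$, i.e.\ the sequence $b_1b_2\dots b_n$ shifted by the common additive constant $a_i-1$. Since a common shift preserves relative order, this block of positions carries a subsequence order-isomorphic to $b$, whence $b\le a\circ_i b$.

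For $a\le a\circ_i b$ I would select the $m$ positions $1,\dots,i-1$, then the single position $i$ (in fact any one position of the inflated block serves equally well), then $i+n,\dots,m+n-1$; their entries are $a_1',\dots,a_{i-1}',\,b_1+a_i-1,\,a_{i+1}',\dots,a_m'$, and the task is to verify this sequence realizes the pattern $a_1a_2\dots a_m$. Two observations do it. First, the relabeling $a_k\mapsto a_k'$ preserves the relative order among the indices $k\ne i$: if $a_k$ and $a_l$ lie on the same side of $a_i$ they are shifted by the same amount ($0$ if below, $n-1$ if above), while if $a_k<a_i<a_l$ then $a_k'=a_k<a_i+n\le a_l'$. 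Second, the value $v:=b_1+a_i-1$ lies in $\{a_i,a_i+1,\dots,a_i+n-1\}$ because $1\le b_1\le n$, so $v$ is larger than every $a_k'=a_k\le a_i-1$ (those with $a_k<a_i$) and smaller than every $a_k'=a_k+n-1\ge a_i+n$ (those with $a_k>a_i$) — exactly the order relations that $a_i$ bears to the corresponding $a_k$. Combining the two observations, the chosen subsequence has the same relative order as $a$, so $a\le a\circ_i b$.

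The proof is thus purely a matter of bookkeeping the shift $a_k'=a_k+n-1$ versus $a_k$ against the range $\{a_i,\dots,a_i+n-1\}$ occupied by the inflated $b$-block, and I do not expect any real obstacle; the one point I would state carefully is the second observation above — that every value appearing in the inflated block is wedged strictly between the "shifted-down" and the "shifted-up" parts of the remaining entries of $a$. Conceptually this is nothing but the fact that $a\circ_i b$ is the inflation of $a$ at its $i$-th entry by $b$ (cf.\ the remark following \eqref{permop}), and that inflating an entry of a permutation can only create, never destroy, occurrences of either factor; the explicit subsequences given above make this precise without appealing to the general theory of permutation inflations.
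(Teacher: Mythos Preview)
Your proof is correct and follows essentially the same approach as the paper: both arguments read the required subsequences directly off formula~\eqref{permop} and verify order-isomorphism by case analysis on whether $a_k$ lies above or below $a_i$. The only cosmetic difference is that the paper singles out the position $k$ in the inflated block where $b_k=1$, so that the selected value is exactly $a_i$ and the comparison with the remaining $a_k'$ becomes literally the original comparison $a_k$ versus $a_i$; you instead take the first block position and invoke the sandwich $a_k'<a_i\le v\le a_i+n-1<a_k'+n-1$, which --- as you note parenthetically --- shows that any block position would do.
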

\begin{proof}
The second of the two comparisons follows directly from the definition \eqref{permop}, since the\linebreak entries
${(b_1+a_i-1)\,(b_2+a_i-1)\,\dots (b_n+a_i-1)}$ are readily seen to be in the same relative order as $b_1b_2\dots b_n$.

To handle the first one, note that since $b_k=1$ for some $1\leq k\leq n$, then \eqref{percsec} contains 
a subsequence $a_1'a_2'\dots a_i'\dots a_m'$, where $a_i'=a_i$ in accordance with the notation introduced before. We claim that the entries of this subsequence are in the same relative order as $a_1\dots a_m$. That is, for any $1\leq p,q\leq m$, $a_p<a_q$ implies $a_p'<a_q'$. Indeed, if $a_i<a_p<a_q$, then $a_p'=a_p+n-1<a_q+n-1=a_q'$.
Next, if $a_p\leq a_i<a_q$, then $a_p'=a_p<a_q+n-1=a_q'$. Finally, if $a_p<a_q\leq a_i$, then $a_p'=a_p<a_q=a_q'$.
\end{proof}
\begin{corollary}
\label{avcor}
Let $\tau$ be a permutation. The set $\overline{Av}(\tau)$ of all permutations containing $\tau$ is an ideal in $Perm$. Indeed, let $b\in \overline{Av}(\tau)$. Then $\tau \leq b\leq a\circ_i b$ for any $a\in Perm(m)$. Thus, $\overline{Av}(\tau)$ is a left ideal. Similarly, $\tau\leq b\leq b\circ_j a$, showing that it is a right ideal as well.
\end{corollary}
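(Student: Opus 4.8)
The plan is to derive this immediately from the preceding Lemma, using only the fact — recorded just above the Lemma — that pattern containment $\leq$ is a partial order on $Perm$, hence in particular transitive.

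Recall that $\overline{Av}(\tau)=\{\sigma\in Perm:\tau\leq\sigma\}$. To verify that $\overline{Av}(\tau)$ is a \emph{left ideal}, I would fix $b\in\overline{Av}(\tau)\cap Perm(n)$ together with an arbitrary $a\in Perm(m)$ and $1\leq i\leq m$. The Lemma gives $b\leq a\circ_i b$; combining this with the hypothesis $\tau\leq b$ and transitivity yields $\tau\leq a\circ_i b$, i.e. $a\circ_i b\in\overline{Av}(\tau)$, which is exactly the left-ideal condition. For the \emph{right-ideal} condition I would instead fix $b\in\overline{Av}(\tau)\cap Perm(m)$ and an arbitrary $a\in Perm(n)$, $1\leq j\leq m$, and invoke the other half of the Lemma's conclusion — namely $b\leq b\circ_j a$, which is the Lemma applied with $b$ playing the role of its first argument. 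Transitivity again gives $\tau\leq b\leq b\circ_j a$, so $b\circ_j a\in\overline{Av}(\tau)$. Since $\overline{Av}(\tau)$ is then both a left and a right ideal, it is an ideal, completing the argument.

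There is no genuine obstacle here: all of the substance is already contained in the Lemma, and the corollary is a purely formal consequence. The only point that requires a moment's care is the bookkeeping with the Lemma's two conclusions — using ``$b\leq a\circ_i b$'' for the left-ideal half and ``$a\leq a\circ_i b$'' (after relabelling so that the $\overline{Av}(\tau)$-element occupies the left slot of the partial composition) for the right-ideal half — but nothing deeper is needed.
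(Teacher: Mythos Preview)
Your proposal is correct and matches the paper's own argument essentially verbatim: the paper's proof is embedded in the corollary statement itself and consists precisely of applying the two conclusions of the preceding Lemma together with transitivity of $\leq$. There is nothing to add or correct.
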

On the contrary, pattern-avoiding families $Av(\tau_1,\dots,\tau_k):=\{\sigma\in Perm|\, \tau_i\not{\leq}\sigma\text{ for all }1\leq i\leq k\}$ tend not to be closed under the partial compositions in $Perm$. As an example, $231$ avoids the pattern $321$, but $231\circ_1 231$ does not. Indeed, $321\leq 34251=231\circ_1 231$. In this regard, pattern-avoiding families that form suboperads of $Perm$ are somewhat exceptional and are related to avoidance of \emph{simple} permutations, as we discuss in section \ref{permpres}.
% Let $\tau$ be a permutation. By the very definition \eqref{permop} of the partial compositions in $Perm$, all permutations containing $\tau$ as a pattern form a left ideal in $Perm$.

\subsection{The associative operad $As$.}
\label{AsOpEx}
We consider a particularly simple example of a non-$\Sigma$ operad, called \emph{the associative operad}. Specifically, we set $As(n):=\{pt_n\}$, a one-point set for all $n\geq 2$. Here, we consider the \emph{non-unital} version of the associative operad, where $As(1):=\varnothing$. 
The partial compositions are defined by setting $pt_m\circ_i pt_n:=pt_{m+n-1}$ for all $m,n\geq 2$, $1\leq i\leq m$. One readily verifies that \eqref{assoc1} and \eqref{assoc2} do indeed hold in $As$. The operad $As$ may be viewed as a counting operad in that it merely keeps track of the number of leaves of the trees involved in a tree grafting operation. In particular, we may think of the sole element $pt_m$ of $As(m)$ as of a canonical representative of a tree with $m$ leaves. Namely, we take it to be a planar rooted corolla with $m$ leaves. A partial composition amounts to tree grafting followed by an edge contraction.
\begin{center}
    \resizebox{0.8\textwidth}{!}{\begin{tikzpicture}[scale=1, grow=up, level distance=1.5cm, sibling distance=1.5cm, inner sep=2mm]

    % Tree t1 (left)
    \node[circle, draw] (A1) {}
        child { node(A4) {\Large 4} }
        child { node(A4) {\Large 3} }
        child { node(A3) {\Large 2} }
        child { node (A2) {\Large 1} };

    % Label for t1
%    \node at (-2, -1.5) {\(t_1\)};
    \node at (2.2, 0) {\Huge $\circ_2$};
    % Tree t2 (center)
    \node[circle, draw, right=4cm of A1] (B1) {}
        child { node (B2) {\Large 3} }
        child { node (B3) {\Large 2} }
        child { node (B4) {\Large 1} };

    % Label for t2
 %   \node at (2, -1.5) {\(t_2\)};
    
    % Equals sign
    \node at (7, 0) {\Huge =};
    
    % Grafted tree t3 (right)
    \node[circle, draw, right=4cm of B1] (C1) {}
        child { node(C6) {\Large 6}}
        child { node(C7) {\Large 5}}
        child { node[circle, draw] (C2) {}
            [sibling distance=1.5cm]
            child { node (C3) {\Large 4} }
            child { node (C4) {\Large 3} }
            child { node (C5) {\Large 2} }        
        }
        child { node(C0) {\Large 1}};
    \node at (12, 0) {\Huge $\equiv$};
    \node[circle, draw, right=5cm of C1] (D0) {}
        [sibling distance=1cm]
        child { node(D6) {\Large 6}}
        child { node(D5) {\Large 5}}        
        child { node(D4) {\Large 4}}
        child { node(D3) {\Large 3}}
        child { node(D2) {\Large 2}}
        child { node(D1) {\Large 1}};
    % Label for t3
  %  \node at (9, -1.5) {\(t_3\)};
    
   \end{tikzpicture}}
    
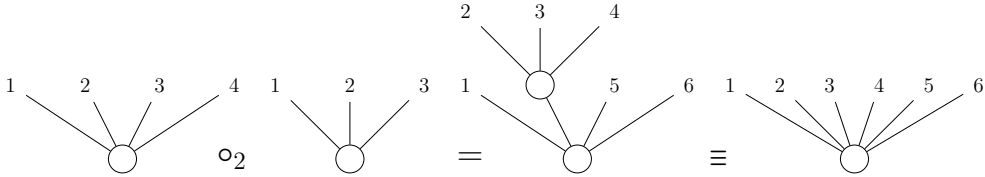
\captionof{figure}{Evaluating a partial composition in $As$.}
\end{center}
\subsection{The free product $As*As$.}
The free product $As*As$ of $As$ with itself can be defined analogously to the free product of monoids by virtue of a certain universal property. More explicitly, the elements of $As*As$ can be identified with the elements $pt'_m$ and $pt''_n$ of two copies of $As$ for all $m,n\geq 2$ and their formal iterated $(-\circ_i-)$-products, modulo the associativity conditions \eqref{assoc1}, \eqref{assoc2} and the partial composition relations for $As$ that hold separately for the $pt'_m$'s and $pt''_n$'s.
Graphically, the operad $As*As$ can be thought of as an operad of bicolored trees. These are planar rooted trees, where each vertex has at least two descendants and is colored either red or blue in such a way that no two adjacent vertices are of the same color. The partial compositions amount to the usual grafting followed by an edge contraction, whenever two adjacent vertices of the same color appear.
\begin{figure}[H]
    \centering
    \resizebox{0.65\textwidth}{!}{\begin{tikzpicture}[scale=1, grow=up, level distance=1.5cm, sibling distance=1.5cm, inner sep=2mm]

    % Tree t1 (left)
    \node[circle, draw, fill=red!40] (A1) {}
        child { node[circle, draw, fill=cyan!40] (A2) {}
            child { node (A3) {\Large 4} }
            child { node (A4) {\Large 3} }
        }
        child { node(A5) {\Large 2} }
        child { node (A6) {\Large 1} };

    % Label for t1
%    \node at (-2, -1.5) {\(t_1\)};
    \node at (2.2, 0) {\Huge $\circ_2$};
    % Tree t2 (center)
    \node[circle, draw, right=4cm of A1, fill=cyan!40] (B1) {}
        child { node (B2) {\Large 3} }
        child { node (B3) {\Large 2} }
        child { node (B4) {\Large 1} };
    % Label for t2
 %   \node at (2, -1.5) {\(t_2\)};
    
    % Equals sign
    \node at (7.3, 0) {\Huge =};
    
    % Grafted tree t3 (right)
    \node[circle, draw, right=6cm of B1, fill=red!40] (C1) {}
        [sibling distance=3cm]
        child { node[draw, circle, fill=cyan!40] (C6) {}
            [sibling distance=1.5cm]
            child { node (C7) {\Large 6} }
            child { node (C8) {\Large 5} }
        }
        child { node[circle, draw, fill=cyan!40] (C2) {}
            [sibling distance=1.5cm]
            child { node (C3) {\Large 4} }
            child { node (C4) {\Large 3} }
            child { node (C5) {\Large 2} }
        }
        child { node(C9) {\Large 1} };

    % Label for t3
  %  \node at (9, -1.5) {\(t_3\)};
    
   \end{tikzpicture}}\\
    \resizebox{0.65\textwidth}{!}{\begin{tikzpicture}[scale=1, grow=up, level distance=1.5cm, sibling distance=1.5cm, inner sep=2mm]

    % Tree t1 (left)
    \node[circle, draw, fill=cyan!40] (A1) {}
        child { node[circle, draw, fill=red!40] (A2) {}
            child { node (A3) {\Large 4} }
            child { node (A4) {\Large 3} }
        }
        child { node(A5) {\Large 2} }
        child { node (A6) {\Large 1} };

    % Label for t1
%    \node at (-2, -1.5) {\(t_1\)};
    \node at (2.2, 0) {\Huge $\circ_2$};
    % Tree t2 (center)
    \node[circle, draw, right=4cm of A1, fill=cyan!40] (B1) {}
        child { node (B2) {\Large 3} }
        child { node (B3) {\Large 2} }
        child { node (B4) {\Large 1} };

    % Label for t2
 %   \node at (2, -1.5) {\(t_2\)};
    
    % Equals sign
    \node at (7.3, 0) {\Huge =};
    
    % Grafted tree t3 (right)
    \node[circle, draw, right=6cm of B1, fill=cyan!40] (C1) {}
        [sibling distance=1.5cm]
        child { node[draw, circle, fill=red!40] (C6) {}
            [sibling distance=1.5cm]
            child { node (C7) {\Large 6} }
            child { node (C8) {\Large 5} }
        }
        child { node (C3) {\Large 4} }
            child { node (C4) {\Large 3} }
            child { node (C5) {\Large 2} }
        child { node(C9) {\Large 1} };

   \end{tikzpicture}}
    \caption{Evaluating partial compositions of planar rooted bicolored trees}
\end{figure}
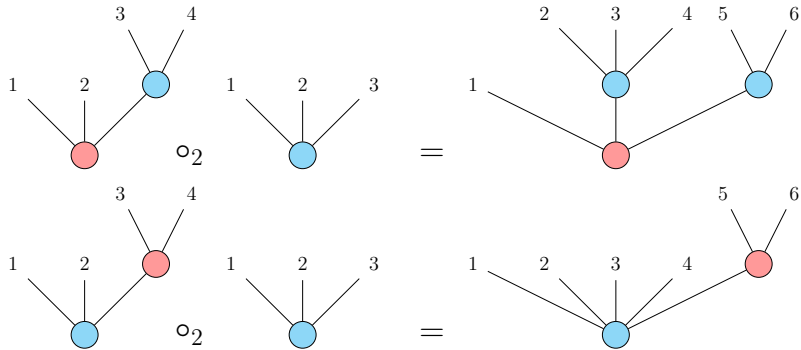
Planar rooted bicolored trees is another family of objects that are known to be counted by the big Schr\"oder numbers. That is, these are the cardinalities of the components of $As*As$.
\section{The suboperad of separable permutations}
\label{percsec}
Let $Perm_2$ be the suboperad of $Perm$ generated by the elements $12$ and $21$ in arity $2$. This is the suboperad of \textit{separable permutations}. Indeed, the direct and the skew sum of two permutations $\sigma$ and $\tau$ can be produced as $\sigma\oplus \tau=(12\circ_1 \sigma)\circ_{|\sigma|+1} \tau$ and 
$\sigma\ominus \tau=(21\circ_1 \sigma)\circ_{|\sigma|+1} \tau$ respectively. The inductive definition of separable permutations ensues.
In terms of pattern avoidance, separable permutations can be characterized as those that avoid the patterns 2413 and 3142 \cite[Section 2.2.5]{Kitaev}. 
\begin{proposition}
Operads $Perm_2$ and $As*As$ are isomorphic.
\end{proposition}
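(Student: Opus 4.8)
The plan is to realize the two obvious copies of $As$ inside $Perm_2$ and to glue them together by the universal property of the free product. A direct check with \eqref{permop} shows that the identity permutations $\mathrm{id}_n=12\cdots n$ satisfy $\mathrm{id}_m\circ_i\mathrm{id}_n=\mathrm{id}_{m+n-1}$ and the reverse permutations $w_n=n(n-1)\cdots1$ satisfy $w_m\circ_i w_n=w_{m+n-1}$ for all $m,n\ge2$ and $1\le i\le m$, while an easy induction from $\mathrm{id}_2=12$ and $w_2=21$ gives $\mathrm{id}_n,w_n\in Perm_2$. Hence $pt_n\mapsto\mathrm{id}_n$ and $pt_n\mapsto w_n$ each define an operad morphism $As\to Perm_2$, and by the universal property of the free product they assemble into a single operad morphism $\Phi\colon As*As\to Perm_2$ with $\Phi(pt'_n)=\mathrm{id}_n$ and $\Phi(pt''_n)=w_n$. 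Since composing $\mathrm{id}_k$ at its $k$ inputs with permutations $\tau_1,\dots,\tau_k$ produces $\tau_1\oplus\cdots\oplus\tau_k$ (iterating the identity $\sigma\oplus\tau=(12\circ_1\sigma)\circ_{|\sigma|+1}\tau$), and likewise $w_k$ produces $\tau_1\ominus\cdots\ominus\tau_k$, the morphism $\Phi$ sends a bicolored tree with red (resp.\ blue) root to the iterated direct sum (resp.\ skew sum) of the permutations attached to its subtrees, with a leaf contributing the length-$1$ permutation. Surjectivity of $\Phi$ is then immediate: its image is a suboperad of $Perm_2$ containing $\Phi(pt'_2)=12$ and $\Phi(pt''_2)=21$, hence equals $Perm_2$.

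It remains to establish injectivity, which is the crux, and for this I would exhibit an explicit inverse $\rho$. Recall the standard facts that a permutation of length $\ge2$ is either $\oplus$-decomposable or $\ominus$-decomposable and never both; that a separable permutation of length $\ge 2$ is always one of the two; and that the associated maximal decomposition $\sigma=\sigma_1\oplus\cdots\oplus\sigma_k$ (resp.\ $\sigma=\sigma_1\ominus\cdots\ominus\sigma_k$), with $k\ge2$ and each $\sigma_i$ separable and $\oplus$-indecomposable (resp.\ $\ominus$-indecomposable), is unique. Define $\rho$ on separable permutations recursively: send the length-$1$ permutation to a leaf; send a maximal direct sum $\sigma_1\oplus\cdots\oplus\sigma_k$ to the bicolored tree with a red root whose $i$-th subtree is $\rho(\sigma_i)$; send a maximal skew sum to the analogous blue-rooted tree. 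Indecomposability of the factors forces the color condition (adjacent vertices of different colors), so $\rho(\sigma)$ is a genuine element of $As*As$, and the recursion terminates because the $\sigma_i$ are strictly shorter. One checks $\Phi\circ\rho=\mathrm{id}$ by induction on length, and $\rho\circ\Phi=\mathrm{id}$ by induction on trees: for the latter the key point is that $\Phi$ of a red-rooted tree equals $\Phi(T_1)\oplus\cdots\oplus\Phi(T_k)$, and each $\Phi(T_i)$ is $1$ or a skew sum, hence $\oplus$-indecomposable, so this is exactly the maximal decomposition that $\rho$ reads off (dually for blue roots). As $Perm_2$ is precisely the set of separable permutations by the theorem of Shapiro and Stephens recalled above, $\rho$ is defined on all of $Perm_2$ and is a two-sided inverse to $\Phi$; therefore $\Phi$ is an isomorphism of operads.

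The hard part is exactly this injectivity step — equivalently, the statement that $12$ and $21$ generate $Perm_2$ subject only to the two associativity relations $12\circ_1 12=12\circ_2 12$ and $21\circ_1 21=21\circ_2 21$ inherited from the two copies of $As$, with no further relations. The substitution-decomposition argument above is one transparent way to see this. A shorter alternative uses the enumerative data recalled in the paper: $\Phi$ is a surjection in each arity $n$ between finite sets of the same cardinality — the big Schr\"oder number, which counts both the separable permutations of length $n$ and the planar rooted bicolored trees with $n$ leaves — so $\Phi$ is automatically bijective. I would nevertheless present the explicit inverse $\rho$, since it is more informative and is independent of those two counting results.
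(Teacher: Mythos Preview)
Your argument is correct and describes exactly the same isomorphism the paper does: red $n$-corollas go to $\mathrm{id}_n$ (the principal diagonal) and blue $n$-corollas to $w_n$ (its mirror image). The paper's proof is only a one-line sketch with pictures, whereas you supply the verification the paper omits---surjectivity from the generators and injectivity via the unique $\oplus/\ominus$-decomposition of separable permutations (or, equivalently, the Schr\"oder count)---so your write-up is strictly more complete but not a different approach.
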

\begin{proof}
A bijection that respects the partial compositions maps a $n$-leaf corolla with a red vertex to the principal diagonal configuration of size $n$, and a $n$-leaf corolla with a blue vertex to its mirror image.
\begin{table}[H]
\begin{tabular}{cc}
 \resizebox{4cm}{!}{\begin{tikzpicture}[scale=1, grow=up, level distance=1.5cm, sibling distance=1.5cm, inner sep=2mm]

\node[circle, draw, right=4cm of A1, fill=red!40] (B1) {}
child { node (B2) {\Large 4} }
child { node (B3) {\Large 3} }
child { node (B4) {\Large 2} }
child { node (B5) {\Large 1} };

\end{tikzpicture}} & \resizebox{2.5cm}{!}{\includegraphics{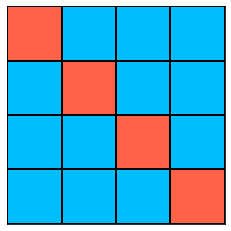}}\\
 \resizebox{4cm}{!}{\begin{tikzpicture}[scale=1, grow=up, level distance=1.5cm, sibling distance=1.5cm, inner sep=2mm]

\node[circle, draw, right=4cm of A1, fill=cyan!40] (B1) {}
child { node (B2) {\Large 4} }
child { node (B3) {\Large 3} }
child { node (B4) {\Large 2} }
child { node (B5) {\Large 1} };

\end{tikzpicture}} & \resizebox{2.5cm}{!}{\includegraphics{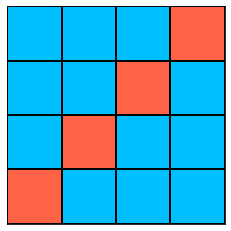}} \\
 \resizebox{5cm}{!}{\begin{tikzpicture}[scale=1, grow=up, level distance=1.2cm, sibling distance=1.6cm, inner sep=2mm]

\node[circle, draw, right=6cm of B1, fill=red!40] (C1) {}
[sibling distance=3cm]
child { node[draw, circle, fill=cyan!40] (C6) {}
    [sibling distance=1.5cm]
           child { node[circle, draw, fill=red!40] (C2) {}
    [sibling distance=1.5cm]
    child { node (C4) {\Large 7} }
    child { node (C5) {\Large 6} }
}
    child { node (C8) {\Large 5} }
}
child { node[circle, draw, fill=cyan!40] (C2) {}
    [sibling distance=1.5cm]
    child { node (C3) {\Large 4} }
    child { node (C4) {\Large 3} }
    child { node (C5) {\Large 2} }
}
child { node(C9) {\Large 1} };
    
\end{tikzpicture}} & \resizebox{2.5cm}{!}{\includegraphics{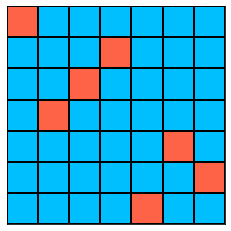}}
\end{tabular}
\end{table}
\end{proof}
\noindent
\begin{remark}
There is another instance of operad $As*As$ emerging in the context of the Ising model. 
Namely, bicolored trees can be used to encode the topological types of the nested insulated spin domain configurations on the lattice in the ordered phase. 
The correspondence is established by associating a colored $n$-leaf corolla to a simply-connected domain of the same respective color containing $n$ subdomains of the opposite color labeled by $1,\dots,n$. 
\begin{table}[H]
\centering
\begin{tabular}{cc}
 \resizebox{4cm}{!}{\begin{tikzpicture}[scale=1, grow=up, level distance=1.5cm, sibling distance=1.5cm, inner sep=2mm]

\node[circle, draw, right=4cm of A1, fill=red!40] (B1) {}
child { node (B2) {\Large 4} }
child { node (B3) {\Large 3} }
child { node (B4) {\Large 2} }
child { node (B5) {\Large 1} };

\end{tikzpicture}} & \resizebox{2.6cm}{!}{\includegraphics{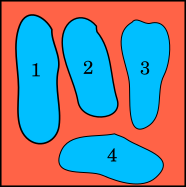}}\\
 \resizebox{4cm}{!}{\begin{tikzpicture}[scale=1, grow=up, level distance=1.5cm, sibling distance=1.5cm, inner sep=2mm]

\node[circle, draw, right=4cm of A1, fill=cyan!40] (B1) {}
child { node (B2) {\Large 4} }
child { node (B3) {\Large 3} }
child { node (B4) {\Large 2} }
child { node (B5) {\Large 1} };

\end{tikzpicture}} & \resizebox{2.6cm}{!}{\includegraphics{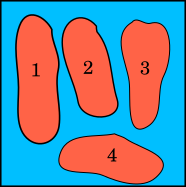}} \\
 \resizebox{5cm}{!}{\begin{tikzpicture}[scale=1, grow=up, level distance=1.2cm, sibling distance=1.6cm, inner sep=2mm]

\node[circle, draw, right=6cm of B1, fill=red!40] (C1) {}
[sibling distance=3cm]
child { node[draw, circle, fill=cyan!40] (C6) {}
    [sibling distance=1.5cm]
           child { node[circle, draw, fill=red!40] (C2) {}
    [sibling distance=1.5cm]
    child { node (C4) {\Large 7} }
    child { node (C5) {\Large 6} }
}
    child { node (C8) {\Large 5} }
}
child { node[circle, draw, fill=cyan!40] (C2) {}
    [sibling distance=1.5cm]
    child { node (C3) {\Large 4} }
    child { node (C4) {\Large 3} }
    child { node (C5) {\Large 2} }
}
child { node(C9) {\Large 1} };
    
\end{tikzpicture}} & \resizebox{2.6cm}{!}{\includegraphics{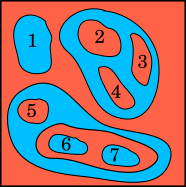}}
\end{tabular}
\end{table}
In terms of this correspondence, the partial composition $a\circ_i b$ amounts to substituting the domain distribution pattern $b$ for the $i$-th domain of $a$. A reader familiar with the notion of the little $2$-disk operad $\mathcal{D}_2$ may think of this as of a non-$\Sigma$ suboperad of the desymmetrization of $\pi_0(\mathcal{D}_2*\mathcal{D}_2)$.
\end{remark}
Another combinatorial model of operad $As*As$, and thus of separable permutations, can be constructed in terms of \emph{plane series-parallel} networks. To define these, we first recall the notion of a \emph{two-terminal} graph.
 Such a graph is a finite connected directed graph, possibly with multiple edges, but without loops, with two distinct vertices -- the \emph{source} and the \emph{sink}, marked ``$+$" and ``$-$" respectively. 
 In addition, we assume that all the edges of the graph are labeled uniquely by $1,\dots,m$. All three pieces of data decorating such a graph -- edge orientations, the choice of the source and the sink in the graph, and the edge labels, are mutually independent. In particular, the set $\mathcal{G}_2(m)$ of all two-terminal graphs with $m$ edges is acted on faithfully by $(\Z_2)^m$,  $\S_m$ and  $\Z_2$ accounting for edge orientation flips, edge relabelings and the source-sink swap respectively.

Let $a$ and $b$ be two-terminal graphs with $m$ and $n$ edges respectively. 
For $1\leq i\leq m$, the partial composition $a\circ_i b$ is defined by replacing the $i$-th edge $u\to v$ of $a$ by the entire graph $b$, with the source and the sink of $b$ mapped to $u$ and $v$ respectively. The resulting graph has $m+n-1$ oriented edges that we re-index in our usual way; cf. example \ref{AsOpEx}. The source and sink of $a$ retain their respective properties in $a\circ_i b$.
\begin{figure}[H]
    \centering
    \resizebox{0.9\textwidth}{!}{\begin{tikzpicture}[inner sep=1mm]

    \begin{scope}
    \node[draw,circle,fill=green!40,minimum size=6mm] (A) at (0,0) {\large{$+$}};
    \node[draw,circle,fill=green!40,minimum size=6mm] (B) at (2,0) {};
    \node[draw,circle,fill=green!40,minimum size=6mm] (C) at (1,1.5) {};
    \node[draw,circle,fill=green!40,minimum size=6mm] (D) at (3,1.5) {\large{$-$}};
    
    % Edges
    \draw[->, bend right=20, >=Latex, line width=0.5mm] (A) to node[sloped, below] {1} (B);
    \draw[->, bend left=20, >=Latex, line width=0.5mm] (B) to node[sloped, above] {2} (C);
    \draw[->, bend right=30, >=Latex, line width=0.5mm] (D) to node[sloped, above] {3} (C);
    \draw[->, bend left=30, >=Latex, line width=0.5mm] (A) to node[sloped, above] {4} (C);
    \draw[->, bend right=30, >=Latex, line width=0.5mm] (B) to node[sloped, below] {5} (D);
    \end{scope}

\node at (4.8, 1) {\huge $\circ_3$};
    
   \begin{scope}[shift={(6,1)}]
    \node[draw,circle,fill=green!40,minimum size=6mm] (A) at (0,0) {\large{$+$}};
    \node[draw,circle,fill=green!40,minimum size=6mm] (B) at (1.5,0) {};    
    \node[draw,circle,fill=green!40,minimum size=6mm] (D) at (3,0) {\large{$-$}};
    
    % Edges
    \draw[->, bend right=40, >=Latex, line width=0.5mm] (A) to node[sloped, below] {1} (B);
    \draw[->, bend left=40, >=Latex, line width=0.5mm] (A) to node[sloped, above] {2} (B);    
    \draw[->, bend left=20, >=Latex, line width=0.5mm] (B) to node[sloped, above] {3} (D);
    \end{scope}

    % Equals sign
    \node at (10, 1) {\huge =};
    
    \begin{scope}[shift={(11,0)}]
    \node[draw,circle,fill=green!40,minimum size=6mm] (A) at (0,0) {\large{$+$}};
    \node[draw,circle,fill=green!40,minimum size=6mm] (B) at (2,0) {};
    \node[draw,circle,fill=green!40,minimum size=6mm] (C) at (1,1.5) {};
    \node[draw,circle,fill=green!40,minimum size=6mm] (D) at (5,1) {\large{$-$}};
    \node[draw,circle,fill=green!40,minimum size=6mm] (E) at (3.5,1.5) {};
    
    % Edges
    \draw[->, bend right=20, >=Latex, line width=0.5mm] (A) to node[sloped, below] {1} (B);
    \draw[->, bend left=20, >=Latex, line width=0.5mm] (B) to node[sloped, above] {2} (C);
    
    \draw[->, bend right=30, >=Latex, line width=0.5mm] (D) to node[sloped, above] {4} (E);
    \draw[->, bend left=30, >=Latex, line width=0.5mm] (D) to node[sloped, above] {3} (E);
    \draw[->, bend right=20, >=Latex, line width=0.5mm] (E) to node[sloped, above] {5} (C);
    
    \draw[->, bend left=30, >=Latex, line width=0.5mm] (A) to node[sloped, above] {6} (C);
    \draw[->, bend right=30, >=Latex, line width=0.5mm] (B) to node[sloped, below] {7} (D);
    \end{scope}

   \end{tikzpicture}}
    \caption{Evaluating a partial composition of two-terminal graphs.}
\end{figure}
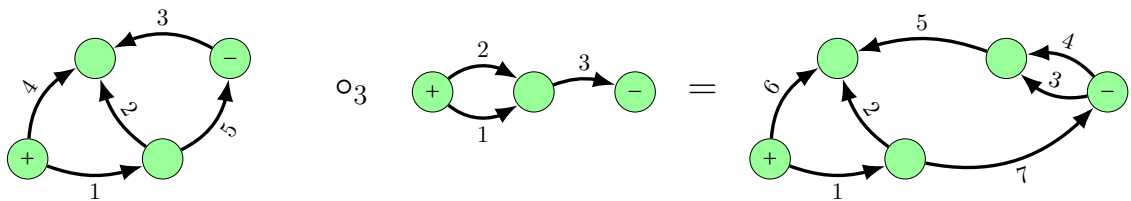
\noindent
This operation turns the collection $\mathcal{G}_2=\{\mathcal{G}_2(m)\}_{m\geq 1}$ into a non-$\Sigma$ operad.
In fact, the partial compositions $(-\circ_i-)$ defined above exhibit certain equivariant properties with respect to the group actions mentioned above, but we will not need this fact for now.

Consider \emph{plane} two-terminal graphs.
By definition, such a graph is planar and comes with a chosen embedding into a Euclidean plane determined up to an orientation-preserving isotopy of the plane. 
\begin{figure}[H]
    \centering
    \resizebox{0.4\textwidth}{!}{\begin{tikzpicture}[
    every node/.style={draw, circle, fill=green!40, minimum size=6mm, inner sep=1mm},
    every edge/.style={->, >=latex, line width=0.5mm},
    label style/.style={sloped},
    edge label/.style={fill=none, draw=none, inner sep=0mm}
]

    \begin{scope}[shift={(0,0)},scale=0.7]
        \node (A) at (0,0) {$+$};
        \node (B) at (1.5,0) {};      
        \node (C) at (3,0) {$-$};

        \draw (A) edge[draw, bend left=40] node[edge label, above] {1} (B);
        \draw (B) edge[draw, bend left=40] node[edge label, above] {2} (C);
        \draw (A) edge[draw, bend right=40] node[edge label, below] {3} (C);
    \end{scope}

    \begin{scope}[shift={(4,0)},scale=0.7]
        \node (A) at (0,0) {$+$};
        \node (B) at (1.5,0) {};      
        \node (C) at (3,0) {$-$};

        \draw (A) edge[draw, bend right=40] node[edge label, below] {1} (B);
        \draw (B) edge[draw, bend right=40] node[edge label, below] {2} (C);
        \draw (A) edge[draw, bend left=40] node[edge label, above] {3} (C);
    \end{scope}

\end{tikzpicture}}
    \caption{Two non-isomorphic plane two-terminal graphs.}
\end{figure}
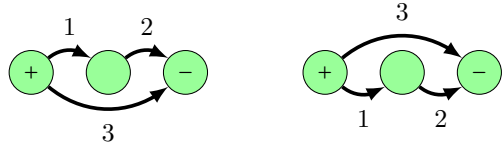
\noindent
Using the partial composition operation of two-terminal graphs introduced above, which is now assumed to preserve the plane structure, we can define the non-$\Sigma$ operad $\mathcal{SP}$ of \emph{plane series-parallel networks} by taking the plane two-terminal graphs of arity $2$ shown below as the generators.
\begin{figure}[H]
    \centering
    \resizebox{0.38\textwidth}{!}{\begin{tikzpicture}[
    every node/.style={draw, circle, fill=green!40, minimum size=6mm, inner sep=1mm},
    every edge/.style={->, >=latex, line width=0.5mm},
    label style/.style={sloped},
    edge label/.style={fill=none, draw=none, inner sep=0mm}
]

    \begin{scope}[shift={(4,0)}]
        \node (A) at (0,0) {$+$};
        \node (B) at (2,0) {$-$};    

        \draw (A) edge[draw, bend right=40] node[edge label, below] {2} (B);
        \draw (A) edge[draw, bend left=40] node[edge label, above] {1} (B);        
    \end{scope}

    \begin{scope}[shift={(0,0)}]
        \node (A) at (0,0) {$+$};
        \node (B) at (1.5,0) {};      
        \node (C) at (3,0) {$-$};

        \draw (A) edge[draw] node[edge label, above] {1} (B);
        \draw (B) edge[draw] node[edge label, above] {2} (C);
    \end{scope}

\end{tikzpicture}}
    \label{SPgraph}
\end{figure}
Any plane two-terminal graph constructed through iterative compositions of these two primitives is connected, directed, acyclic, with all paths oriented from the source to the sink, and edges equipped with a canonical enumeration. The latter, upon picking an appropriate graph embedding, can be characterized by recursively decomposing the graph in the left-to-right and top-to-bottom directions.
Note that each of the two generators gives rise to its own copy of the non-$\Sigma$ non-unital associative operad $As$ within $\mathcal{SP}$. 
% Accordingly, $\mathcal{SP}$ can be described as the non-$\Sigma$ operad of planar rooted trees, where each vertex has at least two descendants and is colored either red or blue. 
% The vertices are decorated in such a way that no two adjacent vertices are of the same color. The partial compositions amount to the usual grafting followed by an edge contraction, whenever two adjacent vertices of the same color appear; cf. example \ref{AsOpEx}.
\begin{table}[H]
\begin{tabular}{ccc}
 \resizebox{4.5cm}{!}{\begin{tikzpicture}[
    every node/.style={draw, circle, fill=green!40, minimum size=6mm, inner sep=1mm},
    every edge/.style={->, >=latex, line width=0.5mm},
    edge label/.style={fill=none, font=\large, draw=none},
]

\node (A) at (0,0) {$+$};
\node (B1) at (1,0) {};
\node (B2) at (2,0) {};
\node (B3) at (3,0) {};
\node (C) at (4,0) {$-$};

\draw (A) edge[draw] node[above,edge label] {1} (B1);
\draw (B1) edge[draw] node[above,edge label] {2} (B2);
\draw (B2) edge[draw] node[above,edge label] {3} (B3);
\draw (B3) edge[draw] node[above,edge label] {4} (C);

\end{tikzpicture}} &
 \resizebox{4cm}{!}{\begin{tikzpicture}[scale=1, grow=up, level distance=1.5cm, sibling distance=1.5cm, inner sep=2mm]

\node[circle, draw, right=4cm of A1, fill=cyan!40] (B1) {}
child { node (B2) {\Large 4} }
child { node (B3) {\Large 3} }
child { node (B4) {\Large 2} }
child { node (B5) {\Large 1} };

\end{tikzpicture}} \\
 \resizebox{3cm}{!}{\begin{tikzpicture}[
    every node/.style={draw, circle, fill=green!40, minimum size=4mm, inner sep=1mm},
    label style/.style={sloped, font=\large},
    edge label/.style={fill=white, font=\large, draw=none},
    every edge/.style={->, >=latex, line width=0.4mm} 
]

\node (A) at (0,0) {$+$};
\node (B) at (2,0) {$-$};    

\draw (A) edge[draw, bend left=55] node[edge label, above] {1} (B);
\draw (A) edge[draw, bend left=25] node[edge label] {2} (B);
\draw (A) edge[draw, bend right=25] node[edge label] {3} (B);
\draw (A) edge[draw, bend right=55] node[edge label, below] {4} (B);

\end{tikzpicture}} &
 \resizebox{4cm}{!}{\begin{tikzpicture}[scale=1, grow=up, level distance=1.5cm, sibling distance=1.5cm, inner sep=2mm]

\node[circle, draw, right=4cm of A1, fill=red!40] (B1) {}
child { node (B2) {\Large 4} }
child { node (B3) {\Large 3} }
child { node (B4) {\Large 2} }
child { node (B5) {\Large 1} };

\end{tikzpicture}} \\
 \resizebox{4.8cm}{!}{\begin{tikzpicture}[
    every node/.style={draw, circle, fill=green!40, minimum size=6mm, inner sep=1mm},
    label style/.style={sloped, font=\huge},
    edge label/.style={fill=none, draw=none, inner sep=0mm, font=\huge},
    every edge/.style={->, draw, >=Latex, line width=0.5mm}
]

\node[draw,circle] (A) at (0,0) {$+$};
\node[draw,circle] (B) at (2,0) {};  
\node[draw,circle] (C) at (4,0) {};
\node[draw,circle] (D) at (6,0) {$-$};
\node[draw,circle] (E) at (3,-2) {};

% Edges - Apply the style explicitly with "edge"
\draw (A) edge[bend left=50] node[edge label, above] {1} (D);
\draw (A) edge[bend left=20] node[edge label, above] {2} (B);
\draw (B) edge[bend left=20] node[edge label, above] {3} (C);
\draw (C) edge[bend left=20] node[edge label, above] {4} (D);

\draw (A) edge[bend right=20] node[edge label, below] {5} (E);
\draw (E) edge[bend right=30] node[edge label, below] {7} (D);
\draw (E) edge[bend left=30] node[edge label, below] {6} (D); 

\end{tikzpicture}} &
 \resizebox{5cm}{!}{\begin{tikzpicture}[scale=1, grow=up, level distance=1.2cm, sibling distance=1.6cm, inner sep=2mm]

\node[circle, draw, right=6cm of B1, fill=red!40] (C1) {}
[sibling distance=3cm]
child { node[draw, circle, fill=cyan!40] (C6) {}
    [sibling distance=1.5cm]
           child { node[circle, draw, fill=red!40] (C2) {}
    [sibling distance=1.5cm]
    child { node (C4) {\Large 7} }
    child { node (C5) {\Large 6} }
}
    child { node (C8) {\Large 5} }
}
child { node[circle, draw, fill=cyan!40] (C2) {}
    [sibling distance=1.5cm]
    child { node (C3) {\Large 4} }
    child { node (C4) {\Large 3} }
    child { node (C5) {\Large 2} }
}
child { node(C9) {\Large 1} };
    
\end{tikzpicture}} \\
\end{tabular}
\end{table}
\noindent
For an account of \emph{non-plane} unlabeled series-parallel networks, which are counted by MacMahon's numbers (A000084 \cite{oeisA000084}), and their relation to the symmetric commutative operad see \cite{Sartayev}.

\subsection{Combinatorics of configurations.}
Before we turn to characterization of percolating permutations matrices as those corresponding to separable permutations, we make a few preliminary observations concerning the combinatorics of final configurations.
Let $R_1$ and $R_2$ be non-overlapping rectangular regions in a grid that share a corner point.
That is, up to relabeling, one of the following holds:
\begin{enumerate}
    \item $(i,j)$ is the bottom-right corner cell of $R_1$, $(i+1,j+1)$ is the upper-left corner cell of $R_2$;
    \item 
    $(i,j)$ is the upper-right corner cell of $R_1$, $(i-1,j+1)$ is the bottom-left corner cell of $R_2$.
\end{enumerate}
In particular, in a $1$-by-$m$ or $n$-by-$1$ grid there are no rectangles that share a corner point.
\begin{lemma}
\label{union_lemma} 
Let $\sigma_1$, $\sigma_2$ be configurations such that $\sigma_1$ spans $R_1$, $\sigma_2$ spans $R_2$ and let $R$ be the smallest rectangular region in the grid containing both $R_1$ and $R_2$. If $R_1$ and $R_2$ share a corner point, then $\sigma_1\cup \sigma_2$ spans $R$.
\end{lemma}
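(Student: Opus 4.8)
The plan is to split the argument into two parts: a soft reduction, using the monotonicity of bootstrap percolation, from the statement about the configurations $\sigma_1,\sigma_2$ to a purely geometric statement about two all-red rectangles sharing a corner; and then a short induction establishing that geometric statement.

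For the reduction, note that $\sigma_1\cup\sigma_2\supseteq\sigma_k$ for $k=1,2$, and the update rule is monotone (adding red cells to an initial configuration can only add red cells to the final one), so $\overline{\sigma_1\cup\sigma_2}\supseteq\overline{\sigma_k}\supseteq R_k$. Hence $R_1\cup R_2$ is entirely red in $\overline{\sigma_1\cup\sigma_2}$, and since $\overline{(-)}$ is monotone and idempotent, it suffices to prove: \emph{if $R_1$ and $R_2$ share a corner point and are entirely red, then $R\subseteq\overline{R_1\cup R_2}$}. Up to a reflection of the grid (which the dynamics respects) we may assume we are in Case (1), so I would set coordinates $R_1=[a_1,i]\times[b_1,j]$ and $R_2=[i+1,a_2]\times[j+1,b_2]$, giving $R=[a_1,a_2]\times[b_1,b_2]$, and observe that $R$ is the disjoint union of $R_1$, $R_2$, the top-right block $B_1=[a_1,i]\times[j+1,b_2]$ and the bottom-left block $B_2=[i+1,a_2]\times[b_1,j]$.

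The heart of the proof is to show that $B_1$ becomes red, and symmetrically $B_2$. For $B_1$ I would induct on the quantity $(i-r)+(c-j)\ge 1$ attached to a cell $(r,c)\in B_1$: its left neighbour $(r,c-1)$ lies in $R_1$ when $c=j+1$ and otherwise in $B_1$ with strictly smaller value of the quantity, while its bottom neighbour $(r+1,c)$ lies in $R_2$ when $r=i$ and otherwise in $B_1$ with strictly smaller value; in every case both of these neighbours are already red, so $(r,c)$ turns red. The block $B_2$ is handled identically, inducting on $(r-i)+(j-c)$ and using the top and right neighbours instead (which land in $R_1$ and $R_2$ respectively). Since $R=R_1\cup R_2\cup B_1\cup B_2$, this gives $R\subseteq\overline{R_1\cup R_2}$, as required.

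The step I expect to require the most care is not conceptual but bookkeeping: $R_1$ or $R_2$ may degenerate to a single row, single column, or single cell, so that $B_1$ or $B_2$ may be empty or one-dimensional. The induction above is set up to cover all of these uniformly, and — crucially — it only ever invokes the left/bottom (resp.\ top/right) neighbour of a cell, each of which exists inside the grid precisely because that neighbour always points toward the already-red region; so no genuine boundary cases arise and there is nothing to check at the edges of the grid. Beyond verifying these incidences, the argument is routine.
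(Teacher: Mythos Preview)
Your proposal is correct and follows essentially the same route as the paper: reduce via monotonicity and idempotence of the closure to the purely geometric claim $\overline{R_1\cup R_2}=R$, then observe that the complement of $R_1\cup R_2$ in $R$ consists of two rectangular blocks, each bounded by a red horizontal edge and a red vertical edge, which fill in by a simple induction. The only difference is that you spell out that induction explicitly (on $(i-r)+(c-j)$, using the left and bottom neighbours), whereas the paper leaves it as ``an obvious inductive argument''.
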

\begin{proof}
We seek to show that $R\subset \overline{\sigma_1\cup \sigma_2}$.
Note that for any two configurations $\tau\subset \sigma$ we have $\overline{\tau}\subset \overline{\sigma}$, and $\overline{\overline{\sigma}}=\overline{\sigma}$.
Since $\overline{\sigma_1}, \overline{\sigma_2}\subset \overline{\sigma_1\cup \sigma_2}$,
then 
$R_1\cup R_2 \subset \overline{\sigma_1}\cup\overline{\sigma_2}\subset \overline{\sigma_1\cup \sigma_2}$. It remains to notice that $\overline{R_1 \cup R_2}=R$. Indeed, 
the complement of $R_1\cup R_2$ in $R$ is the union of two rectangular subregions bounded by red cells placed along a horizontal and a vertical edge. Such a configuration is percolating by an obvious inductive argument.
\begin{table}[H]
\centering
\begin{tabular}{cc}
\resizebox{!}{2.3cm}{\includegraphics{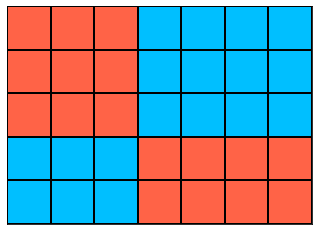}}     & 
\resizebox{!}{2.3cm}{\includegraphics{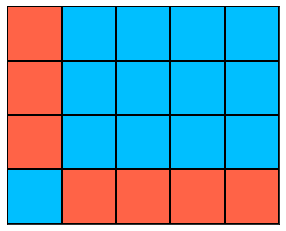}}\\
\end{tabular}
\end{table}
\end{proof}
As usual, for any two cells $(i,j)$, $(i',j')$, the Manhattan, or the $\ell_1$, distance between them is given by $$d((i,j),(i',j'))=|i-i'|+|j-j'|.$$ Let $S$ be an arbitrary subset of a $n$-by-$m$ grid with cell colors ignored. The $k$-\emph{neighborhood} $U_k(S)$ of $S$ is the set of all cells $(i',j')$ of the grid such that there exists $(i,j)\in S$ with $d((i,j),(i',j'))\leq k$. In particular, $U_0(S)=S$. The $k$-\emph{collar} $C_k(S)$ of $S$ is $U_k(S)\setminus S$.
\begin{proposition}
\label{collar_lemma}
For any initial configuration $\sigma$, the final configuration $\overline{\sigma}$ is a disjoint union of rectangular regions such that the $2$-collar of any such region $R$ does not contain any red cells.  In particular, $\sigma$ is percolating precisely when there is only one such region $R$ and $C_2(R)=\varnothing$.
\end{proposition}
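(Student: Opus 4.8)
The plan is to base everything on the single structural fact that the final configuration $\overline{\sigma}$ is \emph{stable}: by definition of the dynamics the process halts exactly when no blue cell has two or more red neighbours, so in $\overline{\sigma}$ every cell not belonging to $\overline{\sigma}$ has at most one neighbour in $\overline{\sigma}$. I would take the ``rectangular regions'' of the statement to be the connected components of $\overline{\sigma}$ with respect to orthogonal adjacency; these automatically partition $\overline{\sigma}$, so the real content is (i) every component $C$ is a combinatorial rectangle, and (ii) the $2$-collar of every component contains no red cell --- equivalently, any two components are at Manhattan distance at least $3$.

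For (i) the starting point is that stability forbids, inside any $2\times 2$ block of cells, two local patterns: three red cells (the fourth cell would then have two red neighbours) and two red cells on a diagonal (each of the other two cells would then have two red neighbours). Viewing $C$ as a union of closed unit squares, absence of the diagonal pattern makes the topological boundary $\partial C$ a disjoint union of simple closed rectilinear curves, and absence of the three-cell pattern forbids any reflex ($270^{\circ}$) angle of $C$ along such a curve. Since for any simple rectilinear polygon the number of convex corners exceeds the number of reflex corners by exactly $4$, the outer boundary of $C$ then has four corners and is a rectangle, while any hole in $C$ would contribute at least four reflex angles of $C$ and hence cannot occur; so $C$ is the rectangle it bounds. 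I expect this passage --- promoting the local $2\times 2$ constraints to global rectangularity --- to be the main obstacle, being essentially the only step that requires more than a routine local check. (An essentially equivalent route, with the same substantive step: show first that every row and every column of $C$ is a contiguous interval, note that consecutive row-intervals must overlap or $C$ disconnects, and then observe that two consecutive row-intervals with distinct left endpoints would again produce a blue cell with two red neighbours, forcing all left --- and symmetrically all right --- endpoints to coincide.)

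For (ii) suppose some red cell $x$ lay in the $2$-collar of a component $R$. Its distance to $R$ is then $1$ or $2$, and it cannot be $1$: a red orthogonal neighbour of $R$ would share a component with $R$, i.e.\ lie in $R$, contradicting $x\notin R$. So $d(x,r)=2$ for some $r\in R$, and there is a cell $m$ orthogonally adjacent to both $x$ and $r$ --- the cell between them when they are collinear, one of the two intermediate corner cells when they differ diagonally. Such an $m$ is blue, since a red $m$ adjacent to $r\in R$ would lie in $R$ and make $x$ adjacent to $R$; but then $m$ has the two red neighbours $x$ and $r$, contradicting stability. Hence the $2$-collar of every component, in particular of every rectangular region of $\overline{\sigma}$, contains no red cell.

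The closing assertion follows at once: $\sigma$ percolates exactly when $\overline{\sigma}$ is the whole grid. If it is, there is a single region, the grid itself, whose $2$-collar is empty because no cell lies outside the grid. Conversely, if there is a single region $R$ with $C_2(R)=\varnothing$, then $U_2(R)=R$, hence $U_1(R)=R$; as the grid is connected, $R$ cannot be a proper subset of it, so $R=\overline{\sigma}$ is the whole grid and $\sigma$ percolates.
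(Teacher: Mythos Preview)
Your proof is correct and follows essentially the same approach as the paper: both arguments identify the same two forbidden local $2\times 2$ patterns in a stable configuration (three red cells forming an L-tromino, and two red cells on a diagonal) and use them to establish first the rectangularity of each connected component and then the $2$-collar condition via the same distance-$1$/distance-$2$ case split. The only real difference is one of detail on the rectangularity step: the paper simply asserts that a non-rectangular connected region must contain one of the two forbidden patterns, whereas your corner-counting argument for rectilinear polygons (no diagonal pattern $\Rightarrow$ boundary is a $1$-manifold; no three-cell pattern $\Rightarrow$ no reflex corners $\Rightarrow$ outer boundary is a $4$-gon and there are no holes) actually proves this implication.
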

\begin{proof}
It suffices to focus on the case of a non-percolating $\sigma$.
Let $R$ be a connected, in the sense of the vertical and horizontal adjacency, component of $\overline{\sigma}$. Suppose that it is not rectangular. Then $R$ contains an $L$-shaped tromino
or a $2$-by-$2$ diagonal configuration, possibly rotated, as a subconfiguration. Both cases admit a percolation step in contradiction to $\overline{\sigma}$ being a final configuration.
\begin{table}[H]
\centering
\begin{tabular}{cc}
\resizebox{1.6cm}{!}{\includegraphics{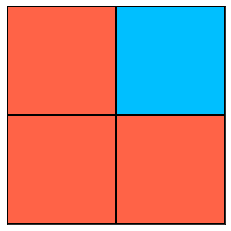}}     & 
\resizebox{1.6cm}{!}{\includegraphics{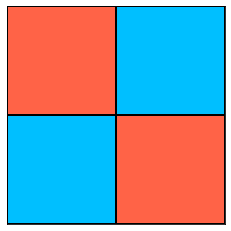}}\\
\end{tabular}
\end{table}

Now, let $R$ be a rectangular connected component of $\overline{\sigma}$.
Let $(i',j')\in C_2(R)$ be a red cell. If $(i',j')$ is at distance $1$ from some $(i,j)\in R$, then it is horizontally or vertically adjacent to $(i,j)$ and, hence, must be in $R$ by connectedness. A contradiction. 
Let $(i',j')\in C_2(R)$ be a red cell at distance $2$ from some $(i,j)\in R$. Then $(i,j)$ and $(i',j')$ either form a diagonal $2$-by-$2$ configuration or sit at the opposite ends of a horizontal or vertical segment of three cells with a blue cell in the middle. In either case, such a subconfiguration admits a percolation step, once again contradicting $\overline{\sigma}$ being a final configuration.

\begin{table}[H]
\centering
\begin{tabular}{cc}
\resizebox{4cm}{!}{\includegraphics{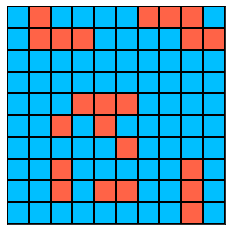}}     & 
\resizebox{4cm}{!}{\includegraphics{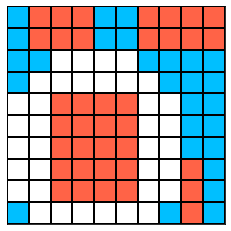}}\\
\end{tabular}
\captionof{figure}{An initial configuration $\sigma$ and the final configuration $\overline{\sigma}$. On the right figure, the $2$-collar of one of the connected components is highlighted.}
\end{table}
\end{proof}
An equivalent characterization of final configurations is that $\overline{\sigma}$ is a disjoint union of rectangular regions with pairwise disjoint $1$-neighborhoods.
The set $\mathcal{F}_{n,m}$ of all such disjoint unions in a $n$-by-$m$ grid is a \textit{convexity} structure\cite[Section 1.1]{VandeVel} on the grid. That is, $\mathcal{F}_{n,m}$, as a family of subsets of the grid, satisfies the following: (1) the empty set $\varnothing$ and the (all-red) grid are in $\mathcal{F}_{n,m}$; (2) $\mathcal{F}_{n,m}$ is closed under intersections; (3) $\mathcal{F}_{n,m}$ is closed under nested unions $\bigcup\limits_{k\geq 1}C_k$, where $C_1\subseteq C_2\subseteq \dots$. As with any convexity structure, there is the \emph{closure}, or the \emph{convex hull}, operator associated with $\mathcal{F}_{n,m}$. For a given set $\sigma$, it returns the smallest set from $\mathcal{F}_{n,m}$ containing $\sigma$. By the above proposition, this is precisely the final configuration $\overline{\sigma}$.

Let $\sigma$, $\tau$ be some configurations, where $\tau$ is in a $p$-by-$q$ grid. Given $(i,j)\in \sigma$, we perform the following operation: the cell $(i,j)$ is replaced by the entire configuration $\tau$; any cell in the $i$-th row of $\sigma$ other than $(i,j)$ is refined to a column of height $p$ of the same color; any cell in the $j$-th column of $\sigma$ other than $(i,j)$ is refined to a row of width $q$ of the same color.
The resulting configuration is denoted by $\sigma\circ_{(i,j)}\tau$.
In particular, if $\sigma$ and $\tau$ are permutation matrices, then $\sigma\circ_{(\sigma(j),j)}\tau$ is $\sigma\circ_j\tau$ in the sense of the partial compositions in $Perm$ as described in section \ref{premopsec}. A more detailed study of this operation is a subject of our forthcoming paper.
\begin{table}[H]
\centering
\begin{tabular}{cc}
\resizebox{3cm}{!}{\includegraphics{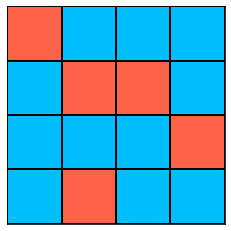}}     & 
\resizebox{3cm}{!}{\includegraphics{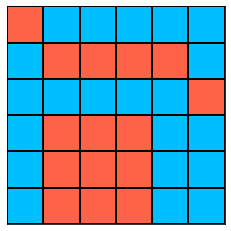}}\\
\end{tabular}
\captionof{figure}{Configurations $\sigma$ and $\sigma\circ_{(4,2)}\tau$, where $\tau$ is an all-red $3$-by-$3$ grid.}
\end{table}

\begin{lemma}
\label{insert_comp_lemma} 
Let $\sigma$ be non-percolating and $\tau$ be any configuration. For any $(i,j)\in \sigma$,
the configuration $\sigma\circ_{(i,j)}\tau$ is non-percolating.
\end{lemma}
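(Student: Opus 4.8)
The plan is to produce a blue cell in $\overline{\sigma\circ_{(i,j)}\tau}$ by comparing the percolation process on $\sigma':=\sigma\circ_{(i,j)}\tau$ with the one on $\sigma$ through a coarsening map. First I would fix the bookkeeping. The grid underlying $\sigma'$ is partitioned into rectangular \emph{blocks} $\beta_{(a,b)}$ indexed by the cells $(a,b)$ of $\sigma$: the block $\beta_{(i,j)}$ carries the copy of $\tau$ and has size $p$-by-$q$; each $\beta_{(i,b)}$ with $b\neq j$ is a monochromatic column of height $p$; each $\beta_{(a,j)}$ with $a\neq i$ is a monochromatic row of width $q$; and every other block is a single cell. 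Let $\pi$ be the map sending every cell of $\beta_{(a,b)}$ to $(a,b)$. Two structural facts are to be checked directly from the definition of $\sigma\circ_{(i,j)}\tau$: (i) two blocks of $\sigma'$ share a boundary segment exactly when the corresponding cells of $\sigma$ are horizontally or vertically adjacent, and consequently a cell $c'$ of $\sigma'$ has at most one neighbour in any block other than its own; (ii) every initially red cell of $\sigma'$ lies in $\pi^{-1}(\sigma)$, hence in $\pi^{-1}(\overline{\sigma})$, because the strip blocks are monochromatic of the colour of the corresponding cell of $\sigma$, the singleton blocks keep their colour, and $\beta_{(i,j)}$ maps onto $(i,j)\in\sigma$.

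The heart of the argument is to show that $\pi^{-1}(\overline{\sigma})$ is stable under the percolation rule on $\sigma'$: if a cell $c'$ has two distinct neighbours $d_1',d_2'\in\pi^{-1}(\overline{\sigma})$, then $\pi(c')\in\overline{\sigma}$. If $\pi(c')\in\sigma$ this is immediate, since $\sigma\subseteq\overline{\sigma}$. Otherwise $\pi(c')$ is blue in $\sigma$, and I argue by contradiction assuming $\pi(c')\notin\overline{\sigma}$. Since $\pi$ sends the whole block containing $c'$ to $\pi(c')\notin\overline{\sigma}$, neither $d_1'$ nor $d_2'$ can lie in $c'$'s own block; by fact (i) they then lie in two distinct blocks adjacent to it, so $\pi(d_1')$ and $\pi(d_2')$ are two distinct cells adjacent to $\pi(c')$ in $\sigma$, both in $\overline{\sigma}$. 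But then the blue cell $\pi(c')$ has two red neighbours in $\overline{\sigma}$, which is impossible because $\overline{\sigma}$ is a final configuration. Hence $\pi(c')\in\overline{\sigma}$, and $\pi^{-1}(\overline{\sigma})$ is stable.

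To finish, recall that $\overline{\sigma'}$ is the smallest percolation‑stable set containing the initial configuration of $\sigma'$; combining stability with fact (ii) gives $\overline{\sigma'}\subseteq\pi^{-1}(\overline{\sigma})$, that is $\pi(\overline{\sigma'})\subseteq\overline{\sigma}$. Now use the hypothesis: $\sigma$ is non‑percolating, so there is a cell $(a,b)$ of $\sigma$ with $(a,b)\notin\overline{\sigma}$. Its block $\beta_{(a,b)}$ is non‑empty (every block has at least one cell), and by the inclusion just established it is disjoint from $\overline{\sigma'}$. Therefore $\overline{\sigma'}$ contains a blue cell, i.e. $\sigma\circ_{(i,j)}\tau$ is non‑percolating. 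Note that nothing is assumed about $\tau$ beyond being a configuration on a $p$-by-$q$ grid, so the statement holds for arbitrary $\tau$ as claimed.

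The main obstacle is the stability step, and inside it the point that when $\pi(c')$ is blue the two relevant red neighbours of $c'$ cannot sit inside $c'$'s own (possibly fat) block and hence descend to two \emph{distinct} red neighbours of $\pi(c')$. The rectangular shape of the blocks and the block‑adjacency description in fact (i) are exactly what makes this work, so those preliminary observations — together with the monochromaticity of the strip blocks used in fact (ii) — should be recorded carefully before the stability argument.
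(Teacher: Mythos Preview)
Your argument is correct, and it differs substantially from the paper's. The paper relies on the structural result of Proposition~\ref{collar_lemma}: in $\overline{\sigma}$ it locates a rectangular component $R\ni(i,j)$ with non-empty $2$-collar, then tracks $R$ and its collar through the substitution, observing that the transformed collar is still blue and still a $2$-collar of a rectangle containing everything that $R\circ_{(i,j)}\tau$ can span. Your route bypasses the collar description entirely: the coarsening map $\pi$ together with the block-adjacency observation lets you show directly that $\pi^{-1}(\overline{\sigma})$ is closed under the update rule, whence $\overline{\sigma'}\subseteq\pi^{-1}(\overline{\sigma})$. This is more elementary (no appeal to Proposition~\ref{collar_lemma}) and more robust---the same proof goes through verbatim for any monotone $r$-neighbour rule, not just $r=2$. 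The paper's argument, in exchange, exhibits an explicit blue region (the transformed collar) rather than just an abstract preimage, and is shorter once the structural proposition is already in place. One cosmetic point: your first case ``$\pi(c')\in\sigma$'' is subsumed by the contradiction hypothesis $\pi(c')\notin\overline{\sigma}$, so you could start the stability step directly with that assumption.
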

\begin{proof}
If $\sigma\circ_{(i,j)}\tau$ is all-blue, which happens when $\sigma$ contains a single red cell $(i,j)$ and $\tau$ is all-blue,
then the statement follows immediately. 
Otherwise, let $\tau$ be such that is has at least one red cell. Since $\sigma$ is non-percolating, there exists a connected rectangular component $R$ of $\overline{\sigma}$ containing $(i,j)$ with the non-empty $2$-collar $C_2(R)$. 
Let $C'$ be the subconfiguration that $C_2(R)$ transforms to in $\sigma\circ_{(i,j)}\tau$, and $R'$ be the smallest rectangular region in $\sigma\circ_{(i,j)}\tau$ containing $R\circ_{(i,j)}\tau$. Since $C'$ consists of blue cells only, $R'$ has a non-empty $2$-colar. Then $\overline{R\circ_{(i,j)}\tau}\subset R'$ has a non-empty $2$-collar as well, and by lemma \ref{collar_lemma}, $\sigma\circ_{(i,j)}\tau$ is non-percolating.
\end{proof}
\begin{theorem}
\label{ShStThm}
All percolating $n$-by-$n$ permutation matrices for $n\geq 2$ form a suboperad $Perm_2$ of the non-$\Sigma$ operad of permutations $Perm$. The suboperad is generated by permutations $12$ and $21$ in arity $2$ and consists of all separable permutations. 
\end{theorem}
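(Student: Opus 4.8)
\emph{Strategy.} The plan is to identify, for each $n\ge 2$, three collections of permutations: the percolating $n$-by-$n$ permutation matrices, the separable permutations of length $n$, and $Perm_2(n)$. Since $Perm_2$ is a suboperad of $Perm$ by construction, once the three coincide the assertion that the percolating permutation matrices form a suboperad follows for free. That $Perm_2$ consists exactly of the separable permutations is essentially the discussion opening this section: the identities $\sigma\oplus\tau=(12\circ_1\sigma)\circ_{|\sigma|+1}\tau$, $\sigma\ominus\tau=(21\circ_1\sigma)\circ_{|\sigma|+1}\tau$, together with $1\oplus\sigma=12\circ_2\sigma$ and $1\ominus\sigma=21\circ_2\sigma$, show by induction on length that every separable permutation of length $\ge 2$ lies in $Perm_2$; conversely $12,21$ are separable, and separability is stable under partial composition, because $\sigma\circ_i\tau$ is an inflation of $\sigma$ and the obstructing patterns $2413,3142$ are \emph{simple} (no interval of length $2$ or $3$), so an occurrence of either in $\sigma\circ_i\tau$ meets the inserted copy of $\tau$ in none, one, or all four of its points and therefore already occurs in $\sigma$ or in $\tau$. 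It remains to prove that a permutation matrix is percolating if and only if it is separable.

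\emph{Separable $\Rightarrow$ percolating.} Induct on the length. Length $2$ amounts to checking that $12$ and $21$ percolate a $2$-by-$2$ grid. For length $n\ge 3$ a separable permutation is $\sigma=\mu\oplus\nu$ or $\sigma=\mu\ominus\nu$ with $\mu,\nu$ separable of smaller positive lengths; in the grid of $\sigma$ the two summands occupy disjoint square blocks that share a corner point, and each summand spans its own block (by the inductive hypothesis when the block has side $\ge 2$, and trivially — a lone red cell spans a $1$-by-$1$ region — when it has side $1$). Lemma~\ref{union_lemma} then shows $\mu\cup\nu$ spans the smallest rectangle containing both blocks, which is the whole $n$-by-$n$ grid, so $\sigma$ percolates.

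\emph{Percolating $\Rightarrow$ separable.} I would prove the contrapositive via two lemmas. First: a permutation matrix $\rho$ has a blue cell with two red neighbours if and only if $\rho$ has an interval of length $2$ (two consecutive positions carrying two consecutive values). This is a short case analysis — among the two vertical neighbours of a cell at most one is red, and likewise among the two horizontal neighbours, so two red neighbours force a pattern of the shape $\rho(j\pm1)=\rho(j)\pm1$ — and it implies that the matrix of a simple permutation of length $\ge 4$ is already a final configuration with only $n<n^2$ red cells, hence non-percolating. Second, a propagation lemma: if $\mu$ is non-percolating, so is $\mu\oplus 1$, and symmetrically $1\oplus\mu$, $\mu\ominus 1$, $1\ominus\mu$; one analyses $\overline{\mu\oplus1}$ using Proposition~\ref{collar_lemma} (inside the $\mu$-block $\overline\mu$ is a disjoint union of rectangles with empty $2$-collars, the adjoined ``far corner'' cell interacts with the dynamics only through the single new row and column, and the empty-$2$-collar condition confines all extra red cells that appear to that row and column, so $\overline{\mu\oplus1}$ still meets the $\mu$-block in exactly $\overline\mu$, a proper subset). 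Via the identity $\mu\oplus\nu=(\mu\oplus1)\circ_{(|\mu|+1,|\mu|+1)}\nu$ and its reflections, together with Lemma~\ref{insert_comp_lemma}, this upgrades to: if some direct or skew summand of $\sigma$ is non-percolating, then so is $\sigma$. Now induct on length via the substitution decomposition $\sigma=\alpha[\mu_1,\dots,\mu_k]$ of a non-separable $\sigma$ (the base case, length $4$, being $\sigma\in\{2413,3142\}$). If $\sigma$ is sum- or skew-decomposable, then since a non-separable permutation contains $2413$ or $3142$ and these are sum- and skew-indecomposable, one of the two summands contains such a pattern, hence is non-separable, hence non-percolating by induction, hence $\sigma$ is non-percolating by the propagation lemma. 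Otherwise $\alpha$ is simple of length $\ge 4$, hence non-percolating by the first lemma, and $\sigma$ is obtained from $\alpha$ by the iterated insertions $\circ_{(i,j)}$ inflating each point of $\alpha$ by $\mu_i$, hence non-percolating by Lemma~\ref{insert_comp_lemma}. Combining the two implications identifies the percolating permutation matrices with the separable ones, i.e.\ with $Perm_2$, and the theorem follows.

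\emph{Main obstacle.} The real work is the propagation lemma: showing that adjoining the single far-corner cell to a non-percolating $\mu$ cannot make the closure leak past $\overline\mu$ inside the $\mu$-block. The empty-$2$-collar structure supplied by Proposition~\ref{collar_lemma} is exactly what blocks the leak, but turning this into a fully rigorous argument — tracking which cells of the new row and column can become red, and checking the cascade stops there — is the point that needs care. Everything else is either in the preliminaries of this section or a routine induction resting on Lemmas~\ref{union_lemma} and~\ref{insert_comp_lemma} and Proposition~\ref{collar_lemma}.
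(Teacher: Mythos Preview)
Your proof is correct, and the ``separable $\Rightarrow$ percolating'' direction matches the paper's argument exactly (induction via Lemma~\ref{union_lemma}). For the converse, however, you take a genuinely different and more laborious route. You prove the contrapositive by analysing non-separable permutations through their substitution decomposition: simple permutations of length $\geq 4$ are already final (your first lemma), and for the sum/skew-decomposable case you need the propagation lemma asserting that $\mu\oplus 1$ is non-percolating whenever $\mu$ is. That lemma is true and your sketch is sound (one verifies, using Proposition~\ref{collar_lemma}, that $\overline{\mu\oplus 1}$ is either $\overline{\mu}\cup\{(k{+}1,k{+}1)\}$ or differs from it only in extending the single rectangle of $\overline{\mu}$ containing $(k,k)$ into the new row and column), but it is real work and you rightly flag it as the main obstacle.

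The paper avoids all of this with a direct argument. If $\sigma$ percolates and $n\geq 3$, some blue cell $(i,j)$ turns red at the very first step; since each row and column of a permutation matrix has a unique red cell, its two red neighbours force $\sigma$ to contain a $2$-by-$2$ diagonal block $\tau\in\{12,21\}$ there, so $\sigma=\sigma'\circ_j\tau$ for some permutation $\sigma'$ of length $n-1$. Now the contrapositive of Lemma~\ref{insert_comp_lemma} says that $\sigma'$ must percolate (else $\sigma=\sigma'\circ_{(\sigma'_j,j)}\tau$ would not), and induction finishes. What this buys is that Lemma~\ref{insert_comp_lemma} does double duty: you use it only to push non-percolation \emph{up} along inflations, whereas the paper uses its contrapositive to pull percolation \emph{down} along deflations, which sidesteps the propagation lemma, the substitution decomposition, and the separate treatment of simple permutations entirely. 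Your route has the side benefit of exhibiting directly that simple permutation matrices of length $\geq 4$ are fixed points of the dynamics, but for the theorem at hand the paper's argument is substantially shorter.
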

\begin{proof}
Let $\sigma$ be a percolating $n$-by-$n$ permutation matrix. We will show by induction on $n$ that $\sigma$, as a permutation, is separable. The base case is clear: both of the $2$-by-$2$ permutation matrices are percolating and the corresponding permutations are separable.
Let $n>2$. Since $\sigma$ is percolating, there exists at least one blue cell $(i,j)$ in $\sigma$ that will turn red at the very first step of the percolation process. Since the $i$-th row and the $j$-th column of $\sigma$ contain only one red cell, blue cell $(i,j)$ must appear as a blue corner of a $2$-by-$2$ diagonal configuration $\tau$. 
Without loss of generality, we may assume that $(i,j)$ is the top-left corner of $\tau$. The remaining three cases are analogous.
Let $\sigma'$ be the configuration obtained by removing the $(i+1)$-th row and the $(j+1)$-th column of $\sigma$, while contracting $\tau$ to a single red cell at $(i,j)$. That is, $\sigma'$ is a $(n-1)$-by-$(n-1)$ permutation matrix such that $\sigma=\sigma'\circ_j \tau$ for a $2$-permutation $\tau$. By lemma \ref{insert_comp_lemma}, $\sigma'$ must be percolating and, by the inductive assumption, it is separable as a permutation. Therefore, by the inductive definition of separability, $\sigma$ is separable as well.

It remains to show that any separable permutation is percolating. This is done by another round of induction on $n$, this time using lemma \ref{union_lemma}.
\end{proof}
In \cite{ShapiroStephens} the authors get the result by implicitly using the $As*As$ model of $Perm_2$ instead.

As a corollary to lemma \ref{insert_comp_lemma},
all non-percolating permutation matrices form a suboperad $NPerm_2$, which is the set-theoretic complement of $Perm_2$ in $Perm$.
In fact, by corollary \ref{avcor}, a stronger result holds, and $NPerm_2$ is an ideal in $Perm$.
As a summary to this section, all permutation matrices retain their respective percolative properties under the substitution operation, and thus, under block renormalization and reduction. Percolating permutation matrices are minimal, as percolating sets, and are closed under the partial composition within the permutation operad.

\section{A filtration of $Perm$ and generalized Schr\"oder numbers}
\label{permpres}
We consider a certain inductively defined  filtration $P_2\subset P_3\subset \dots$ of $Perm$ as of a $Perm_2$-bimodule. Namely, first we set $P_2:=Perm_2$ and $G_2:=\{12,21\}$. Then, for every $n>2$, let $G_n:=\S_n\setminus P_{n-1}(n)$, and define $P_n$ to be the $Perm_2$-bimodule generated by $G_i$ in arity $i$ for all $2\leq i\leq n$.
We can compute the following.
\begin{itemize}
\item 
$G_3=\varnothing$. All six permutations of length $3$ are representable as partial compositions of $12$ and $21$.
\item 
$G_4=\{2413,3142\}$
\begin{table}[H]
\centering
\begin{tabular}{cc}
\resizebox{2.2cm}{!}{\includegraphics{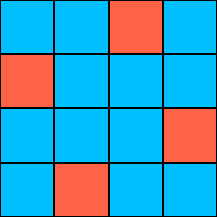}}     & 
\resizebox{2.2cm}{!}{\includegraphics{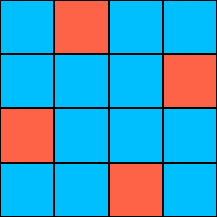}}\\
\end{tabular}
\end{table}
\item 
$G_5=\{42513,31524,35142,24153,41352,25314\}$
\begin{table}[H]
\centering
\begin{tabular}{cccc}
   \includegraphics[width=2.2cm]{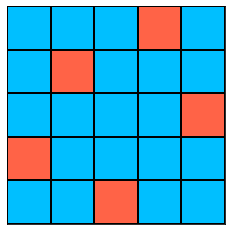}  & 
   \includegraphics[width=2.2cm]{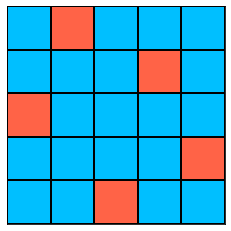}  &
   \includegraphics[width=2.2cm]{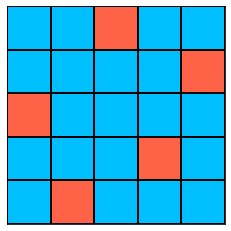}  &
   \includegraphics[width=2.2cm]{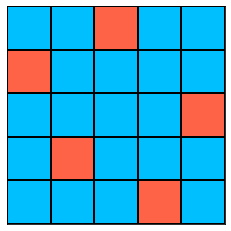} \\
     &
\includegraphics[width=2.2cm]{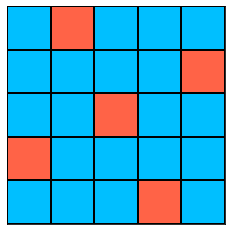}
&
\includegraphics[width=2.2cm]{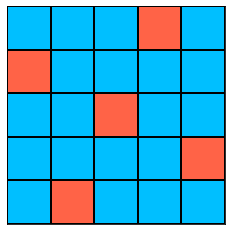}    
& 
\end{tabular}
\end{table}
\item
$G_6$ consists of $46$ permutations, $G_7$ consists of 354 permutations etc.
\begin{table}[H]
\centering
\begin{tabular}{cccc}
\resizebox{2.5cm}{!}{\includegraphics{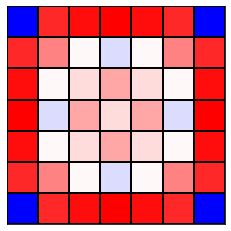}}   & 
\resizebox{2.5cm}{!}{\includegraphics{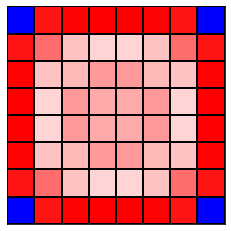}}
&
\resizebox{2.5cm}{!}{\includegraphics{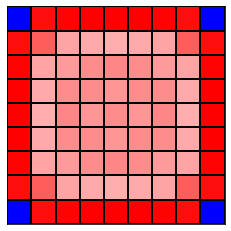}}   & 
\resizebox{2.5cm}{!}{\includegraphics{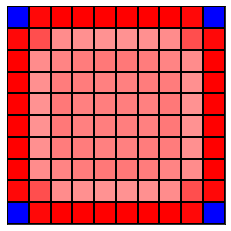}}
\end{tabular}
\captionof{figure}{The frequency maps of red cells in $G_7,G_8,G_9,G_{10}$.}
\end{table}
\end{itemize}

% Since any term $P_n$ of the filtration for $n\geq 4$ contains permutations $2413$, $3142$, then by the pattern-avoidance characterization of the percolating permutation matrices $Perm_2$, any $P_n$ and, in particular, any generating set $G_n$ for $n\geq 4$ consists of non-percolating sets. 
For $n\geq 4$, each of the generators sets $G_n$ consists of non-percolating configurations of size $n$. Hence, by lemma \ref{insert_comp_lemma}, the $Perm_2$-bimodules generated by $G_n$ consist of non-percolating sets as well.
Note that because of the overall symmetry of both the grid and the percolation rules, corresponding in terms of the underlying physical model to the symmetry of the isotropic Ising Hamiltonian, each $P_n$ splits into irreducible representations of the dihedral group of order $8$, the group of symmetries of a square. In particular, within $G_n$'s one may spot the subfamilies of non-percolating "wheels" and "sunshines" forming orbits of length $4$ and $2$ respectively.
\begin{table}[H]
\centering
\begin{tabular}{cc}
\resizebox{2.6cm}{!}{\includegraphics{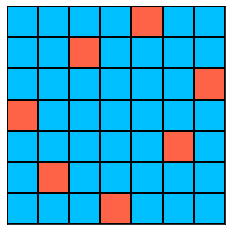}}     & 
\resizebox{2.6cm}{!}{\includegraphics{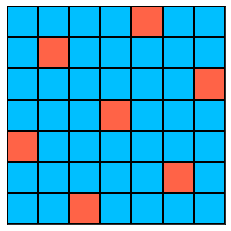}}\\
\end{tabular}
\captionof{figure}{A "wheel" and a "sunshine" in $G_7$.}
\end{table}
\subsection{A presentation of $Perm$.}
The generators $G_n$ ($n\geq 2$) are counted by the following sequence:
\begin{align}
\label{Gcardseq}
2, 0, 2, 6, 46, 354, 3106, 29926, 315862\dots
\end{align}
Up to the argument shift and the first few terms, this is sequence A078603 \cite{oeisA078603}, which is known to count the number of ways of cyclically arranging numbers $1,2,\dots,n$ so that no two adjacent entries would differ by~$1$. Combined with the observation that, by its inductive definition, the filtration $P_2\subset P_3\subset \dots \subset Perm$ is exhaustive, this leads to the following
\begin{theorem}
$Perm$ admits a presentation with generators $G_n$ indexed by oriented Hamiltonian cycles on the complement of the cyclic graph $C_{n+1}$ for $n\geq 4$, and two exceptional generators $G_2=\{12,21\}$ in arity $n=2$.
\end{theorem}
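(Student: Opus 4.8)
I will prove the theorem in three steps — generation, the combinatorial indexing of the generators (the bulk of the work), and the relations — as follows.

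\textbf{Generation.} By induction on $n$, $P_n(m)=\S_m$ for every $m\le n$: the defining equality $G_n=\S_n\setminus P_{n-1}(n)$ gives $P_{n-1}(n)\cup G_n=\S_n$, hence $P_n(n)=\S_n$, and for $m<n$ one combines $P_{n-1}\subseteq P_n$ with the inductive hypothesis. Thus $\bigcup_n P_n=Perm$ and the filtration is exhaustive. Since $P_n$ is the $Perm_2$-bimodule generated by $G_2,\dots,G_n$, while $Perm_2$ is (Theorem~\ref{ShStThm}) the suboperad generated by $G_2$, every permutation is an iterated partial composite of elements of $\bigsqcup_{n\ge 2}G_n$; hence this family generates $Perm$ as an operad. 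The two small arities behave as asserted: every permutation of length $3$ is separable, so $G_3=\varnothing$ and there are no generators in arity $3$, whereas $G_2=\{12,21\}$ is simply the base of the filtration.

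\textbf{The generators as Hamiltonian cycles.} The core of the proof is the description: for $n\ge4$, $\sigma\in G_n$ if and only if $\sigma$ has no interval of size $2$ and $\sigma(1),\sigma(n)\notin\{1,n\}$; call this condition $(\star)$. Granting $(\star)$, one observes that it holds precisely when, in the cyclic word $w_\sigma:=(0,\sigma(1),\sigma(2),\dots,\sigma(n))$ — a cyclic arrangement of $\{0,1,\dots,n\}$ — no two cyclically consecutive entries are congruent to within $\pm1$ modulo $n+1$; equivalently, when $w_\sigma$ is an oriented Hamiltonian cycle on $K_{n+1}$ using none of the $n+1$ edges of $C_{n+1}$, i.e. an oriented Hamiltonian cycle on the complement $\overline{C_{n+1}}$. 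The assignment $\sigma\mapsto w_\sigma$ is injective (recover $\sigma$ by rotating $w_\sigma$ so that $0$ is first and reading off the rest) and surjective (rotating $0$ to the front of such a cycle produces a permutation satisfying $(\star)$, hence an element of $G_n$). So $G_n$ is in bijection with the oriented Hamiltonian cycles on $\overline{C_{n+1}}$, $|G_n|$ counts them, and together with $|G_2|=2$, $|G_3|=0$ this reproduces the sequence $2,0,2,6,46,354,\dots$, agreeing up to an index shift and finitely many initial terms with A078603.

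\textbf{Proof of $(\star)$.} The easy implication is ``$(\star)$ fails $\Rightarrow\sigma\in P_{n-1}(n)$'': an interval of size $2$ at positions $p,p+1$ writes $\sigma=\sigma''\circ_p\varepsilon$ with $\varepsilon\in\{12,21\}$ and $\sigma''\in\S_{n-1}=P_{n-1}(n-1)$, a right action; and $\sigma(1)=1$, $\sigma(1)=n$, $\sigma(n)=n$, $\sigma(n)=1$ give respectively $\sigma=12\circ_2\tau,\ 21\circ_2\tau,\ 12\circ_1\tau,\ 21\circ_1\tau$ with $\tau\in\S_{n-1}$, left actions. For the converse, take $\sigma\in P_{n-1}(n)$; using that $Perm_2$ is a suboperad, the bimodule axioms let one normalize $\sigma$ as $\sigma=\alpha\circ_\ell\rho$, where $\alpha\in Perm_2$ or is the unit and $\rho$ is a generator $\gamma\in\bigsqcup_{i<n}G_i$ with (possibly trivial) separable permutations composed into its inputs. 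If any input of $\gamma$ receives a nontrivial separable permutation, the latter has length $\ge2$, hence an interval of size $2$ (a deepest internal node of its substitution tree), which is also a size-$2$ interval of $\sigma$, so $(\star)$ fails. Otherwise $\rho=\gamma$ and $\sigma=\alpha\circ_\ell\gamma$ with $\alpha$ separable of length $\ge2$ (it cannot be the unit, since $|\gamma|<n$); now $\alpha$ has a size-$2$ interval, and if the inflated position $\ell$ lies outside one such interval it survives in $\sigma$. The remaining case — that every size-$2$ interval of $\alpha$ meets $\ell$ — forces, by a short analysis of $\alpha$'s substitution tree, its root $\oplus$- or $\ominus$-node to have a trivial summand at one of its two ends, necessarily at a position other than $\ell$; then $\sigma$ inherits an extreme boundary value, so $(\star)$ fails. (One also notes, by the same filtration argument, that a generator $\gamma\in G_i$ with $i\ge4$ satisfies $\gamma(1),\gamma(|\gamma|)\notin\{1,|\gamma|\}$, which keeps the $\gamma$-block itself from carrying an extreme-valued boundary position.) I expect this last tree-bookkeeping sub-case to be the most delicate point of the whole argument, though it is entirely elementary.

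\textbf{Relations.} Since the substitution decomposition of a permutation is unique, a complete set of relations presenting $Perm$ on the generating family $\bigsqcup_{n\ge2}G_n$ is: (a) associativity of $\oplus$ and of $\ominus$, namely $12\circ_1 12=12\circ_2 12$ and $21\circ_1 21=21\circ_2 21$, which are exactly the relations of the copy $Perm_2\cong As*As$ carried by $G_2$; together with (b) for each \emph{reducible} generator $\gamma\in\bigsqcup_{n\ge4}G_n$ — those whose substitution tree is not a single simple corolla — the relation rewriting $\gamma$ as the corresponding composite of strictly smaller generators. That (a) and (b) suffice is checked by reducing an arbitrary word in the generators to the normal form read off its substitution tree — using (b) for the reduction and (a) for the ambiguity among equivalent bracketings — and then invoking uniqueness of that tree; confirming that this reduction terminates is the one point of that step that needs a word of argument.
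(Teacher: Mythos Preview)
Your proof is correct, and the route you take is noticeably different from the paper's, so a brief comparison is in order.

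The paper's argument is essentially model-theoretic: it sets up the ``one-point compactification'' $\sigma\mapsto(0,\sigma_1,\dots,\sigma_n)$ as a chord diagram on a circle with $n+1$ marked points, describes the partial composition $\sigma\circ_i\tau$ as a geometric gluing of two marked circles along the $\sigma_i$-th point, and stops there. The identification of $G_n$ with Hamiltonian cycles avoiding the edges of $C_{n+1}$ is then meant to be read off the gluing picture: a $C_{n+1}$-edge in the diagram is exactly the trace left by composing with $12$ or $21$, so the cycles with no such edge are those not reachable from smaller arities by $Perm_2$-actions. The paper does not write this step out.

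You prove exactly that missing step, and you do it combinatorially rather than pictorially. Your condition $(\star)$ --- no size-$2$ interval and $\sigma(1),\sigma(n)\notin\{1,n\}$ --- is precisely the translation of ``no $C_{n+1}$-edge'' back into one-line notation, and your bimodule normalisation $\sigma=\alpha\circ_\ell(\gamma[\beta_1,\dots,\beta_{|\gamma|}])$ is the right tool for the converse implication. The case split you flag as delicate (every size-$2$ interval of $\alpha$ meets $\ell$) is indeed the only substantive case; your sketch of it via the end-summands of the root $\oplus/\ominus$-node is correct and can be made watertight in a few lines (one checks that if both end-summands had length $\ge2$ they would carry disjoint size-$2$ intervals, forcing at least one trivial end-summand at a position $\ne\ell$, whence $\sigma$ picks up a boundary extreme value). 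One small omission: when $\gamma\in G_2$, the block $\gamma$ itself is already a size-$2$ interval of $\sigma$, so $(\star)$ fails immediately --- worth saying, since your later use of $\gamma(1),\gamma(|\gamma|)\notin\{1,|\gamma|\}$ only applies for $|\gamma|\ge4$.

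Your Generation and Relations paragraphs go beyond what the paper's proof contains. The paper addresses relations only later and in passing, noting (via Albert--Atkinson--Klazar) that the suboperad on $\bigsqcup_{n\ge4}G_n$ is free and that non-freeness comes solely from the two $As$-copies on $12$ and $21$; your description in terms of (a) associativity of $\oplus,\ominus$ and (b) rewriting the non-simple members of $G_n$ via their substitution trees is compatible with this and gives the same presentation from a normal-form perspective. What each approach buys: the paper's chord-diagram model is more conceptual and immediately suggests the dihedral-operad enhancement discussed afterwards; your argument is self-contained, makes the bimodule structure do the work, and actually verifies the key bijection rather than leaving it to the picture.
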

\begin{proof}
The proof amounts to constructing a model of $Perm$ in terms of oriented chord diagrams on a circle with $n+1$ cyclically enumerated marked points. Specifically, to a permutation $\sigma=\sigma_1\dots\sigma_n$, we associate an oriented circle with $n+1$ distinct points labeled by $0,1,\dots,n$ in the clockwise order. For each $1\leq i\leq n-1$, we draw an oriented chord $\sigma_i\sigma_{i+1}$ from point $\sigma_i$ to point $\sigma_{i+1}$. Furthermore, we add chords $\sigma_0 \sigma_1$ and $\sigma_n\sigma_0$ producing a cycle. 
The construction can be regarded as a ``one-point compactification" of the one-line notation for permutations.
\begin{table}[H]
\centering
\begin{tabular}{cc}
\raisebox{-.5\height}{\resizebox{!}{3cm}{\usetikzlibrary{arrows}

\begin{tikzpicture}
\newcommand{\midarrow}{\tikz 
\draw[line width=2pt, -{>[scale=3,width=5]}] (0,0) -- +(5,0);}

% Draw the outer circle
\draw[line width=2pt] (0,0) circle (3cm);

\draw[line width=3pt, every node/.style={sloped,allow upside down}] (90-72*0:3cm) 
-- node {\midarrow}(90-72*3:3cm) 
-- node {\midarrow} (90-72*1:3cm) 
-- node {\midarrow} (90-72*4:3cm) 
-- node {\midarrow} (90-72*2:3cm) -- 
node {\midarrow} cycle;

% Define the five points of the pentagon/pentagram with 0 at the top
\foreach \i in {1,2,3,4} {
    %\node[scale=2] at (90 - 72 * \i:3.3cm) {\i};
    \node[scale=1.5,draw, circle, fill=cyan!40, inner sep=3pt] at (90 - 72 * \i:3cm) {\i};
}

\node[scale=1.5,draw, circle, fill=green!40, inner sep=3pt] at (90 - 72 * 0:3cm) {0};

\end{tikzpicture}}} & 
\raisebox{-.5\height}{\resizebox{3cm}{!}{\usetikzlibrary{arrows}

\begin{tikzpicture}
\newcommand{\midarrow}{\tikz 
\draw[line width=2pt, -{>[scale=3,width=4]}] (0,0) -- +(4.6,0);}

% Draw the outer circle
\draw[line width=2pt] (0,0) circle (3cm);

\draw[line width=3pt, every node/.style={sloped,allow upside down}] (90-60*0:3cm) 
-- node {\midarrow}(90-60*4:3cm) 
-- node {\midarrow} (90-60*2:3cm) 
-- node {\midarrow} (90-60*5:3cm) 
-- node {\midarrow} (90-60*1:3cm) 
-- node {\midarrow} (90-60*3:3cm) -- 
node {\midarrow} cycle;

% Define the five points of the pentagon/pentagram with 0 at the top
\foreach \i in {1,2,3,4,5} {
    %\node[scale=2] at (90 - 60 * \i:3.3cm) {\i};
    \node[scale=1.5,draw, circle, fill=cyan!40, inner sep=3pt] at (90 - 60 * \i:3cm) {\i};
}

\node[scale=1.5,draw, circle, fill=green!40, inner sep=3pt] at (90 - 60 * 0:3cm) {0};

\end{tikzpicture}}}\\
\end{tabular}

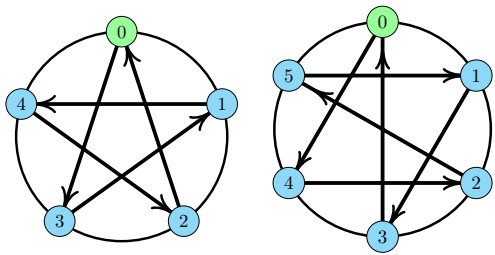
\captionof{figure}{The chord diagrams for permutations $3142\in G_4$ and $42513\in G_5$.}
\end{table}
The partial composition $\sigma\circ_i\tau$ amounts to 
gluing the marked circle $\tau$ by its $0$-th point to the $\sigma_i$-th point of $\sigma$, followed by merging two chord diagrams upon resolving a ``nodal singularity". All the marked points of a new chord diagram are to be indexed according to our usual rule, cf. example \ref{AsOpEx}.
\begin{figure}[H]
\includegraphics[width=0.57\textwidth]{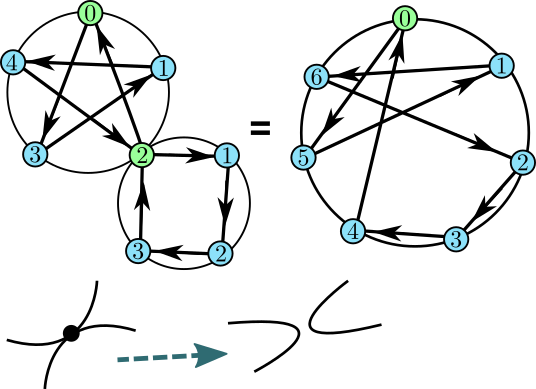}  
\caption{Evaluating $3142\circ_4 123=516234$ in $Perm$.}
\end{figure}
\end{proof}
\begin{example}
As a simple application of the above model, we may construct a family $E(m)$ of non-percolating sets of size $2m$ in a $2m$-by-$2m$ grid for $m\geq 2$. This amounts to observing that a Hamiltonian cycle of length $2m+1$ with the desired property can be constructed by going clockwise along a circle with $2m+1$ labeled points with the step of $2$.
\begin{table}[H]
\centering
\begin{tabular}{cc}
 \resizebox{4cm}{!}{\usetikzlibrary{arrows}

\begin{tikzpicture}
\newcommand{\midarrow}{\tikz 
\draw[line width=2pt, -{>[scale=3,width=4]}] (0,0) -- +(3.4,0);}

% Draw the outer circle
\draw[line width=2pt] (0,0) circle (3cm);

\draw[line width=3pt, every node/.style={sloped,allow upside down}] (90-60*0:3cm) 
-- node {\midarrow}(90-40*2:3cm) 
-- node {\midarrow} (90-40*4:3cm) 
-- node {\midarrow} (90-40*6:3cm) 
-- node {\midarrow} (90-40*8:3cm) 
-- node {\midarrow}(90-40*1:3cm) 
-- node {\midarrow} (90-40*3:3cm) 
-- node {\midarrow} (90-40*5:3cm) 
-- node {\midarrow} (90-40*7:3cm) 
-- node {\midarrow} cycle;

% Define the five points of the pentagon/pentagram with 0 at the top
\foreach \i in {1,2,3,4,5,6,7,8} {
    %\node[scale=2] at (90 - 60 * \i:3.3cm) {\i};
    \node[scale=1.5,draw, circle, fill=cyan!40, inner sep=3pt] at (90 - 40 * \i:3cm) {\i};
}

\node[scale=1.5,draw, circle, fill=green!40, inner sep=3pt] at (90 - 40 * 0:3cm) {0};

\end{tikzpicture}} & \resizebox{3.5cm}{!}{\includegraphics{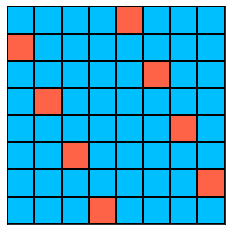}}
\end{tabular}
\captionof{figure}{$E(4)$}
\end{table}
As discussed earlier, (non-)percolating families can be naturally multiplied by means of the grid symmetries. In our case, the grid rotation by $\frac{\pi}{2}$ yields another family that we denote $E'(m)$. The corresponding Hamiltonian cycle starts at $0$ and proceeds with the constant uniform step of $m$.
\begin{table}[H]
\centering
\begin{tabular}{cc}
 \resizebox{4cm}{!}{\usetikzlibrary{arrows}

\begin{tikzpicture}
\newcommand{\midarrow}{\tikz 
\draw[line width=2pt, -{>[scale=3,width=4]}] (0,0) -- +(3.4,0);}

% Draw the outer circle
\draw[line width=2pt] (0,0) circle (3cm);

\draw[line width=3pt, every node/.style={sloped,allow upside down}] (90-60*0:3cm) 
-- node {\midarrow}(90-40*4:3cm) 
-- node {\midarrow} (90-40*8:3cm) 
-- node {\midarrow} (90-40*3:3cm) 
-- node {\midarrow} (90-40*7:3cm) 
-- node {\midarrow}(90-40*2:3cm) 
-- node {\midarrow} (90-40*6:3cm) 
-- node {\midarrow} (90-40*1:3cm) 
-- node {\midarrow} (90-40*5:3cm) 
-- node {\midarrow} cycle;

\foreach \i in {1,2,3,4,5,6,7,8} {
    \node[scale=1.5,draw, circle, fill=cyan!40, inner sep=3pt] at (90 - 40 * \i:3cm) {\i};
}

\node[scale=1.5,draw, circle, fill=green!40, inner sep=3pt] at (90 - 40 * 0:3cm) {0};

\end{tikzpicture}} & \resizebox{3.5cm}{!}{\includegraphics{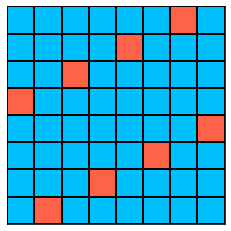}}
\end{tabular}
\captionof{figure}{$E'(4)$}
\end{table}
The reflection of a permutation matrix about the central vertical axis corresponds to flipping the Hamiltonian cycle orientation. In terminology of \cite{AlbertAtkinson}, the families of permutations obtained from $E(m)$'s by means of rotations and reflections are called the \emph{exceptional simple} permutations.
More generally, one gets a non-percolating configuration of size $n$ in a $n$-by-$n$ grid for any primitive $(n+1)$-th root of unity other than 
$e^{\pm \frac{2\pi i}{n+1}}$. The rotations and reflections of the grid can be related to transformations of the complex plane, where the roots reside.
\begin{table}[H]
\centering
\begin{tabular}{cc}
 \resizebox{4cm}{!}{\usetikzlibrary{arrows}

\begin{tikzpicture}
\newcommand{\midarrow}{\tikz 
\draw[line width=2pt, -{>[scale=3,width=4]}] (0,0) -- +(3.4,0);}

% Draw the outer circle
\draw[line width=2pt] (0,0) circle (3cm);

% \draw[line width=3pt, every node/.style={sloped,allow upside down}] (90-60*0:3cm) 
% -- node {\midarrow}(90-40*2:3cm) 
% -- node {\midarrow} (90-40*4:3cm) 
% -- node {\midarrow} (90-40*6:3cm) 
% -- node {\midarrow} (90-40*8:3cm) 
% -- node {\midarrow}(90-40*1:3cm) 
% -- node {\midarrow} (90-40*3:3cm) 
% -- node {\midarrow} (90-40*5:3cm) 
% -- node {\midarrow} (90-40*7:3cm) 
% -- node {\midarrow} cycle;

 \foreach \i [count=\j from 0] in {0, 3, 6, 9, 2, 5, 8, 1, 4, 7} {
        \coordinate (P\j) at (90-36*\i:3cm);
    }

    \draw[line width=3pt, every node/.style={sloped,allow upside down}] (P0)
    \foreach \j in {1,2,3,4,5,6,7,8,9,0} { -- node {\midarrow}(P\j) }
    -- cycle;

\foreach \i in {1,2,3,4,5,6,7,8,9} {
    \node[scale=1.5,draw, circle, fill=cyan!40, inner sep=3pt] at (90 - 36 * \i:3cm) {\i};
}

\node[scale=1.5,draw, circle, fill=green!40, inner sep=3pt] at (90 - 36 * 0:3cm) {0};

\end{tikzpicture}} & \resizebox{3.5cm}{!}{\includegraphics{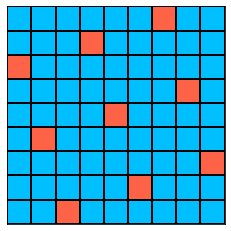}}
\end{tabular}
\captionof{figure}{A non-percolating configuration for $\zeta=e^{\frac{3\pi i}{5}}$.}
\end{table}
\noindent
For the sake of a comparison, a chord diagram of a percolating configuration corresponding to a separable permutation is shown below.
\begin{table}[H]
\centering
\begin{tabular}{cc}
 \resizebox{4cm}{!}{\begin{tikzpicture}
 \newcommand{\midarrow}{\tikz 
 \draw[line width=2pt, -{>[scale=3,width=4]}] (0,0) -- +(1.6,0);}

\draw[line width=2pt] (0,0) circle (3cm);

 \foreach \i [count=\j from 0] in {0, 3, 6, 8, 7, 9, 5, 4, 1, 2} {
        \coordinate (P\j) at (90-36*\i:3cm);
    }

    \draw[line width=3pt, every node/.style={sloped,allow upside down}] (P0)
    \foreach \j in {1,2,3,4,5,6,7,8,9,0} { -- node{\midarrow}(P\j) }
    -- cycle;

\foreach \i in {1,2,3,4,5,6,7,8,9} {
    \node[scale=1.5,draw, circle, fill=cyan!40, inner sep=3pt] at (90 - 36 * \i:3cm) {\i};
}

\node[scale=1.5,draw, circle, fill=green!40, inner sep=3pt] at (90 - 36 * 0:3cm) {0};

\end{tikzpicture}} & \resizebox{3.5cm}{!}{\includegraphics{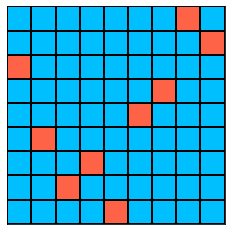}}
\end{tabular}
\captionof{figure}{The chord diagram of separable permutation $368795412\in Perm_2(9)$.}
\end{table}
\end{example}

The cardinalities of the components of the $Perm_2$-bimodules $P_n$ can be regarded as a certain generalization of the big Schr\"oder numbers. The latter measure the cardinalities of the components of $P_2=Perm_2$. We have
\begin{center}
\begin{tabular}{p{2cm}p{7cm}}
$P_2,P_3$ & $2, 6, 22, 90, 394, 1806, 8558,41586\dots$\quad \cite{oeisA006318}\\
$P_4$ & $2,6,24,114,590,3182,17522,97594\dots$ \\
$P_5$ & $2,6,24,120,674,3950,23390,138394\dots$ \\
$P_6$ & $2,6,24,120,720,4686,30842,200034\dots$ \\
$P_7$ & $2,6,24,120,720,5040,37214,270834\dots$
\end{tabular}
\end{center}
At present, the entries for $P_n$ ($n\geq 4$) do not seem to appear in the OEIS.
Note that A078603 \cite{oeisA078603} counts, up to the argument shift, the differences $n!-|P_{n-1}(n)|$.
\subsection{A dihedral operad of Hamiltonian cycles.}
The non-$\Sigma$ operad structure on oriented Hamiltonian cycles on a circle with labeled points has additional symmetries: the labels can be cyclically shifted and a Hamiltonian cycle can be reversed.
To properly encode these symmetries, one needs a certain generalization of non-$\Sigma$ operads.
Specifically, similarly to how planar rooted trees, with canonically enumerated leaves, can be equipped with the grafting operation, planar \emph{non-rooted} trees with enumerated leaves can be given a compositional structure by means of bi-indexed grafting  $(-{}_{i}\circ_j-)$. As before, let $a$, $b$ be planar trees, whose  leaves are (independently) enumerated in the clockwise traversal order. To match the notation introduce before, this time we start enumeration from $0$. On such a tree $a$ with $m$ leaves we have an operation of the cyclic shift $T$ that changes each leaf label $i$ to $i+1 (\text{mod} m)$. Another operation $S$ amounts to flipping a labeled tree and changing each leaf label $i$ to $m-i (\text{mod} m)$ so that the clockwise enumeration order is retained.

Now, $a{}_{i}\circ_j b$ is defined by gluing a tree $b$ by its $j$-th leaf onto the $i$-th leaf of a tree $a$ and enumerating the leaves of the resulting tree clockwise. If $i\neq 0$, the $0$-th leaf of $a{}_{i}\circ_j b$ is taken to be the $0$-th leaf of $a$. Otherwise, we compute $a{}_{i}\circ_j b$ as $T^{-1}(Ta{}_{1}\circ_j b)$
\begin{center}
    \resizebox{0.7\textwidth}{!}{\begin{tikzpicture}[every circle node/.style={fill=cyan!40, inner sep=3pt}]
    
    \begin{scope}[scale=0.7]    
    \node[draw, circle] (root) at (0,0.5) {};
    % First layer of branches
    \node[draw, circle] (branch1) at (-1, 1.3) {};
    \node (branch2) at (1, 1.3) {2};
    \node (branch3) at (1, -0.7) {3};
    \node (branch4) at (-1, -0.7) {4};

    % Connect the root to first branches
    \draw (root) -- (branch1);
    \draw (root) -- (branch2);
    \draw (root) -- (branch3);
    \draw (root) -- (branch4);

    % Leaves connected to branch 1
    \node (leaf1) at (-2, 2.5) {1};
    \node(leaf5) at (-2, 1) {0};
    \draw (branch1) -- (leaf1);
    \draw (branch1) -- (leaf5);
    % Leaves connected to branch 3
    \end{scope}

    \begin{scope}[shift={(2.5,0.8)}, scale=0.7]    
    \node[draw, circle] (r) at (0,0) {};
    \node (a) at (-1, 1) {2};
    \node (b) at (1, 1) {0};
    \node[draw, fill=white] (c) at (0,-1.4) {1};
    \draw (r) -- (a);
    \draw (r) -- (b);
    \draw (r) -- (c);
    \end{scope}

    \node at (1.5, 0.5) {\Large ${}_{2}\circ_1$};

    \node at (3.5, 0.5) {\Large $=$};

    \begin{scope}[scale=0.7, shift={(8,-0.3)}]    
    \node[draw, circle] (root) at (0,0.5) {};
    % First layer of branches
    \node[draw, circle] (branch1) at (-1, 1.3) {};
    \node[draw, circle] (branch2) at (1, 1.3) {};
    \node (b) at (0,2) {2};
    \node (c) at (2,2) {3};
    \node (branch3) at (1, -0.7) {4};
    \node (branch4) at (-1, -0.7) {5};

    % Connect the root to first branches
    \draw (root) -- (branch1);
    \draw (root) -- (branch2);
    \draw (root) -- (branch3);
    \draw (root) -- (branch4);
    \draw (b) -- (branch2);
    \draw (c) -- (branch2);

    % Leaves connected to branch 1
    \node (leaf1) at (-2, 2.5) {1};
    \node(leaf5) at (-2, 1) {0};
    \draw (branch1) -- (leaf1);
    \draw (branch1) -- (leaf5);
    % Leaves connected to branch 3
    \end{scope}
    
\end{tikzpicture}}
    
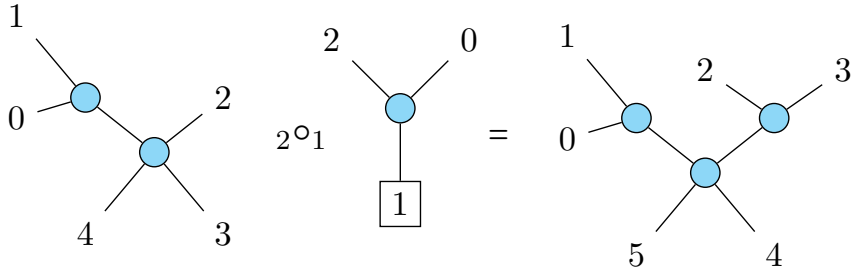
\captionof{figure}{Grafting planar non-rooted labeled trees.}
\end{center}
The operation is associative in a generalized sense, similarly to \eqref{assoc1},\eqref{assoc2}. A structure that axiomatizes the properties of this operation is that of a \textit{dihedral operad} \cite{DupontVallette}. Indeed, the set of all planar non-rooted trees with $n$ leaves labeled by $1,\dots,n$ in a cyclic order can be given the natural action of the dihedral group $\mathbb{D}ih_n$ that amounts to cyclically shifting the labels on the leaves and reversing the enumeration.
The Hamiltonian cycles model of permutations can be given the natural structure of a dihedral operad $Ham$. In this regards, permutations are a "gauge-fixed" object contained within $Ham$. Fixing the gauge amounts to choosing a designated point on a circle, the one labeled by $0$ in terminology and notation of the previous section, in each arity.
\subsection{Simple permutations.}
Starting with $n=7$, the generating sets $G_n$ can be reduced. In fact, a canonical system of generators of $Perm$, contained in $G_n$'s, is given by the \textit{simple permutations}. 
Recall that a permutation $\sigma$ is called \textit{simple} if $\sigma$, as a map from $\{1,\dots,n\}$ to itself, does not map any non-trivial interval to another interval. For example, either one of the permutations listed in $G_4$, $G_5$ above is simple. The permutation $3156427$ is not simple, since it sends a non-trivial interval $\{3,4,5\}$ to $\{4,5,6\}$.
Simple permutations of length $n\geq 1$ are counted by the sequence A111111 (\cite{oeisA111111}):
\[
1, 2, 0, 2, 6, 46, 338, 2926, 28146, 298526\dots
\]
One may compare it with \eqref{Gcardseq}.
For $n\geq 4$, the number of simple permutations of length $n$ is given by ${s_n=-c_n+(-1)^{n+1}\cdot 2}$, where $c_n$ is the coefficient by $x^n$ of the functional inverse $F^{\langle-1\rangle}(x)$ of ${F(x)=\sum\limits_{n=1}^{\infty}n!x^n}$\cite{AlbertAtkinsonKlazar}.
In terms of percolative properties, simple permutations for $n\geq 4$ correspond to irreducible, in terms of the $(-\circ_i-)$-products in $Perm$, non-percolating sets of size $n$.
 Perhaps, one can make a case for treating them as the most extreme, in a sense to be made precise, non-percolating configurations of size $n$ in a $n$-by-$n$ grid. It could be interesting to study the statistics of such extreme configurations.
The suboperad of $Perm$ generated by $G_n$'s for $n\geq 4$ is free by \cite[Theorem 1]{AlbertAtkinsonKlazar}. The non-freeness of $Perm$ is only due to simple permutations $12$ and $21$ generating a copy of $As$ each.
This observation concludes our current study of $Perm$ in relation to bootstrap percolation. We hope to return to this topic later.

\subsection*{Acknowledgments}
The work is supported by RVO:67985840 of the Institute of Mathematics of the Czech Academy of Sciences and the Praemium Academiae grant of M. Markl. 

\printbibliography

@misc{oeisA145901,
  author       = {{OEIS Foundation Inc.}},
  title        = {Sequence {A145901} in {The On-Line Encyclopedia of Integer Sequences}},
  year         = 2024,
  note         = {Available at \url{https://oeis.org/A145901}},
}

@misc{oeisA006318,
  author       = {{OEIS Foundation Inc.}},
  title        = {Sequence {A006318} in {The On-Line Encyclopedia of Integer Sequences}},
  year         = 2024,
  note         = {Available at \url{https://oeis.org/A006318}},
}

@misc{oeisA078603,
  author       = {{OEIS Foundation Inc.}},
  title        = {Sequence {A078603} in {The On-Line Encyclopedia of Integer Sequences}},
  year         = 2024,
  note         = {Available at \url{https://oeis.org/A078603}},
}

@misc{oeisA111111,
  author       = {{OEIS Foundation Inc.}},
  title        = {Sequence {A111111} in {The On-Line Encyclopedia of Integer Sequences}},
  year         = 2024,
  note         = {Available at \url{https://oeis.org/A111111}},
}

@misc{oeisA000084,
  author       = {{OEIS Foundation Inc.}},
  title        = {Sequence {A000084} in {The On-Line Encyclopedia of Integer Sequences}},
  year         = 2024,
  note         = {Available at \url{https://oeis.org/A000084}},
}

@article{Morris,
  title={Minimal percolating sets in bootstrap percolation},
  author={Morris, Robert},
  journal={The Electronic Journal of Combinatorics},
  volume={16},
  number={R2},
  pages={1},
  year={2009}
}

@article{ShapiroStephens,
  title={Bootstrap percolation, the Schr{\"o}der numbers, and the n-kings problem},
  author={Shapiro, Louis and Stephens, A Brooke},
  journal={SIAM Journal on Discrete Mathematics},
  volume={4},
  number={2},
  pages={275--280},
  year={1991},
  publisher={SIAM}
}

@book{LodayVallette,
  author    = {Jean-Louis Loday and Bruno Vallette},
  title     = {Algebraic Operads},
  publisher = {Springer},
  year      = {2012},
  series    = {Grundlehren der mathematischen Wissenschaften},
  volume    = {346},
  address   = {Berlin, Heidelberg},
  isbn      = {978-3-642-30361-6} 
}

@article{AlbertAtkinson,
  title={Simple permutations and pattern restricted permutations},
author={Albert, MH and Atkinson, MD},
  journal={Discrete Mathematics},
  volume={300},
  number={1-3},
  pages={1--15},
  year={2005},
  publisher={Elsevier}
}

@book{Kitaev,
  author    = {Sergey Kitaev},
  title     = {Patterns in Permutations and Words},
  publisher = {Springer-Verlag},
  year      = {2011},
  series    = {Monographs in Theoretical Computer Science},
  address   = {Berlin, Heidelberg},
  isbn      = {978-3-642-17332-5},
}

@book{VandeVel,
  title={Theory of convex structures},
  author={van De Vel, Marcel LJ},
  year={1993},
  publisher={Elsevier}
}

@article{AlbertAtkinsonKlazar,
  title={The enumeration of simple permutations},
  author={Albert, MH and Atkinson, MD and Klazar, M},
  journal={Journal of Integer Sequences},
  volume={6},
  number={2},
  pages={3},
  year={2003}
}

@article{DupontVallette,
  title={Brown’s moduli spaces of curves and the gravity operad},
  author={Dupont, Cl{\'e}ment and Vallette, Bruno},
  journal={Geometry \& Topology},
  volume={21},
  number={5},
  pages={2811--2850},
  year={2017},
  publisher={Mathematical Sciences Publishers}
}

@article{Sartayev,
  title={Free product of operads and free basis of Lie-admissible operad},
  author={Sartayev, Bauyrzhan},
  journal={arXiv preprint arXiv:2011.02954},
  year={2020}
}
%\input{output.bbl}
% \bibliography

\end{document}